\newtheorem{theorem}{Theorem}[section]
\newtheorem{corollary}[theorem]{Corollary}
\newtheorem{lemma}[theorem]{Lemma}
\newtheorem{proposition}[theorem]{Proposition}
\newtheorem{remark}[theorem]{Remark}
\theoremstyle{definition}
\newtheorem{definition}[theorem]{Definition}
\newtheorem{example}[theorem]{Example}
\theoremstyle{remark}
\numberwithin{figure}{section}
\numberwithin{table}{section}
\newcommand*\acknowledgment[1]{%
	\begingroup\noindent
	\rightskip\leftskip
	\begin{flushleft}\textbf{\large Acknowledgment.}\, #1%
		\par\vspace*{1mm}\end{flushleft}\endgroup}
\begin{document}

\title[Higher Topological Complexity For Fibrations]{Higher Topological Complexity For Fibrations}

\author{MEL\.{I}H \.{I}S and \.{I}SMET KARACA}
\date{\today}

\address{\textsc{Melih Is}
Ege University\\
Faculty of Sciences\\
Department of Mathematics\\
Izmir, Turkey}
\email{melih.is@ege.edu.tr}
\address{\textsc{Ismet Karaca}
Ege University\\
Faculty of Science\\
Department of Mathematics\\
Izmir, Turkey}
\email{ismet.karaca@ege.edu.tr}

\subjclass[2010]{55M30, 55R70, 55M99, 68T40, 55R10}

\keywords{Topological complexity, Schwarz genus, higher homotopic distance, Lusternik-Schnirelmann category}

\begin{abstract}
We introduce the higher topological complexity (TC$_{n}$) of a fibration in two ways: the higher homotopic distance and the Schwarz genus. Then we have some results on this notion related to TC, TC$_{n}$ or cat of a topological space or a fibration. We also show that TC$_{n}$ of a fibration is a fiber homotopy equivalence.
\end{abstract}

\maketitle

\section{Introduction}
\label{intro}
\quad The works on determining the topological complexity have been developing since Farber first computed the number of the topological complexity (TC) for a topological space \cite{Farber:2003} (see \cite{Farber:2008} for a comprehensive analysis of the investigation of this concept). Note that such a space must be path-connected. Rudyak \cite{Rudyak:2010} has the number of higher topological complexity TC$_{n}$ for a topological space for the first time (similarly the space is path-connected). It is a reasoned generalization on existing works of TC because of the fact that TC$_{2} = $ TC. After that Pavesic \cite{Pavesic:2019} gives a new point of view to the developing explorations by determining the topological complexity for a map. Such a map must be surjective and continuous or one can choose a surjective fibration. These studies have a common point: All of these investigations can be considered by using the concept of the Schwarz genus (secat) for a fibration, introduced by Schwarz \cite{Schwarz:1966}. One can use a fibrational substitute, as well. On the other hand, there is one more option, which is the homotopic distance \cite{VirgosLois:2020}. It can be used for the computations of TC or TC$_{n}$ of a topological space or a map. The higher homotopic distance \cite{BorVer:2021} is a general case of the homotopic distance and especially useful for the studies of TC$_{n}$ of a topological space. Since LS-category (cat) for a topological space or a map is the base point for the research of TC, we obtain cat by using both the Schwarz genus and the homotopic distance \cite{CorneaLupOpTan:2003,VirgosLois:2020}. 

\quad These studies lead us to solve an open problem and introduce a new concept the higher topological complexity of a surjective fibration $f$ between any path-connected topological spaces, denoted by TC$_{n}(f)$. For $n=2$, we show that TC$_{2}(f) = $ TC$(f)$. In addition, we have that TC$_{n}(f) =$ TC$_{n}(Y)$ if $f$ is $1_{Y} : Y \rightarrow Y$. Therefore, TC$_{n}(f)$ is a general version of respective notions TC$(Y)$, TC$_{n}$ and TC$(f)$, where $f : X \rightarrow X^{'}$ is a surjective fibration and $Y$ is path-connected. We define TC$_{n}(f)$ by using the two options the Schwarz genus and the higher homotopic distance. 

\quad In this study, we first give explicit definitions and significant results of TC, TC$_{n}$ and their related invariants such as D$(f,g)$ and cat. Then we express TC$(f)$ as the homotopic distance. With this expression, we state some results on TC$(f)$. So, we introduce the definition of TC$_{n}(f)$ for a surjective fibration $f$. Later, we state some properties on TC$_{n}(f)$. In addition, we prove that TC$_{n}(f)$ is an invariant on fiber homotopy equivalence fibrations. We also research the relationship between TC$_{n}(f)$ and the other related notions such as TC$_{n}(X)$ and cat$(X)$. We choose some special fibrations as examples for computing their higher topological complexities. In Section \ref{sec:3}, we define the higher topological complexity of a surjective fibration by using secat. 

\section{Preliminaries}
\label{sec:1}
 \quad We begin with recalling some basic notions and results on the homotopic distance, the topological complexities, and the Lusternik-Schnirelmann category.
 
 \begin{definition}\cite{VirgosLois:2020}
 	Given two continuous functions $f$, $f^{'} : X \rightarrow X^{'}$. Then the \textit{homotopic distance} for $f$ and $f^{'}$, denoted by $D(f,f^{'})$, is the smallest integer $l \geq 0$ satisfying that there is a covering
 	\begin{eqnarray*}
 		X = V_{1} \cup \cdots \cup V_{l}
 	\end{eqnarray*}
 	and the restrictions of $f$ and $f^{'}$ on $V_{j}$, $j = 1, \cdots, l$, are homotopic, i.e., $f_{|_{V_{j}}} \simeq f^{'}_{|_{V_{j}}}$. 
 \end{definition}
 
 \begin{definition}\cite{VirgosLois:2020,BorVer:2021}\label{d3}
 	Let $f_{i} : X \rightarrow X^{'}$ be any continuous map for $i = 1, \cdots, k$. Then the \textit{higher homotopic distance}, denoted by $D(f_{1}, \cdots, f_{k})$, is the smallest positive integer $l$ for which
 	\begin{eqnarray*}
 		X = U_{1} \cup \cdots \cup U_{l}
 	\end{eqnarray*}
    is a covering and the condition
    \begin{eqnarray*}
    	f_{1}^{j}|_{U_{j}} \simeq  \cdots \simeq  f_{k}^{j}|_{U_{j}}
    \end{eqnarray*}
    satisfies for each $j = 1, \cdots, l$. If such a covering does not exist, then $D(f_{1}, \cdots, f_{k})$ equals $\infty$.   
 \end{definition}

\quad Note that, in the definition of (higher) homotopic distance, the covering of $X$ has $k+1$ open sets $U_{0}, U_{1}, \cdots U_{k}$, but we consider that $U_{1}$ is the first open set in this covering for consistency of Farber's topological complexity definition (see \cite{Farber:2003}). 

\begin{proposition}\cite{VirgosLois:2020}\label{p8}
	    Each of the following conditions on the notion homotopic distance is satisfied:
	    
		\textbf{i)} If $f$ and $g$ are homotopic ($f \simeq g$), and $f^{'}$ and $g^{'}$ are homotopic ($f^{'} \simeq g^{'}$), then \[\text{D}(f,f^{'}) = \text{D}(g,g^{'}).\]
		
		\textbf{ii)} For any functions $f$, $f^{'} : X \rightarrow X^{'}$, and $k : Z \rightarrow X$, we have \[\text{D}(f \circ k,f^{'} \circ k) \leq \text{D}(f,f^{'}).\]
		
		\textbf{iii)} Let $f$, $f^{'} : X \rightarrow X^{'}$ be any functions. If $k : X^{'} \rightarrow Z$ is a function such that there is a function $k^{'} : Z \rightarrow X^{'}$ satisfying $k^{'} \circ k \simeq 1_{X^{'}}$, then \[\text{D}(k \circ f, k \circ f^{'}) = \text{D}(f,f^{'}).\]
		
		\textbf{iv)} Let $f$, $f^{'} : X \rightarrow X^{'}$ be any functions. If $k : Z \rightarrow X$ is a function such that there is a function $k^{'} : X \rightarrow Z$ satisfying $k \circ k^{'} \simeq 1_{X}$, then \[\text{D}(f \circ k, f^{'} \circ k) = \text{D}(f,f^{'}).\]
		
		\textbf{v)} Let $X$ be normal. If $f$, $f^{'} : X \rightarrow X^{'}$, and $g$, $g^{'} : X^{'} \rightarrow Y$ are any functions, then \[\text{D}(f \circ g,f^{'} \circ g^{'}) \leq \text{D}(f,f^{'}) + \text{D}(g,g^{'}).\]
\end{proposition}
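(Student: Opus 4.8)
The plan is to settle (i)--(iv) by direct manipulation of open coverings and homotopies, and to reduce (v) --- the only part with real content --- to a triangle inequality for $D$, whose proof is where the normality hypothesis is genuinely used. For (i): if $X=V_1\cup\cdots\cup V_l$ with homotopies $H_j\colon f|_{V_j}\simeq f'|_{V_j}$, restrict the homotopies $f\simeq g$ and $f'\simeq g'$ to each $V_j$ and splice them onto $H_j$ to obtain $g|_{V_j}\simeq g'|_{V_j}$; the same covering now witnesses $D(g,g')\le l$, so $D(g,g')\le D(f,f')$, and the reverse inequality follows by running the same argument the other way (the hypothesis is symmetric in the two pairs). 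For (ii): the open covering $\{k^{-1}(V_j)\}$ of $Z$ together with the homotopies $(z,t)\mapsto H_j(k(z),t)$ witnesses $D(f\circ k,f'\circ k)\le l$. The mirror of this computation --- \emph{post}composing each $H_j$ with an arbitrary map $k\colon X'\to Z$ --- gives the free monotonicity $D(k\circ f,k\circ f')\le D(f,f')$, which is all that is needed for (iii) and (iv): in (iii), applying this monotonicity with $k'$ gives $D(k'\circ k\circ f,\,k'\circ k\circ f')\le D(k\circ f,k\circ f')$, and since $k'\circ k\simeq 1_{X'}$ part (i) identifies the left side with $D(f,f')$, hence equality; part (iv) is identical with (ii) in place of the postcomposition monotonicity and $k\circ k'\simeq 1_X$ in place of $k'\circ k\simeq 1_{X'}$.

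For (v), put $p=D(f,f')$ and $q=D(g,g')$. Composing the $p$ homotopies that witness $D(f,f')$ with $g$ shows $D(f\circ g,f'\circ g)\le p$, and part (ii) applied with the map $f'$ shows $D(f'\circ g,f'\circ g')\le q$; so it suffices to establish the triangle inequality
\[
 D(u,w)\ \le\ D(u,v)+D(v,w)
\]
for maps $u,v,w\colon X\to Y$ with $X$ normal, and then take $u=f\circ g$, $v=f'\circ g$, $w=f'\circ g'$. To prove the triangle inequality, fix open coverings $X=A_1\cup\cdots\cup A_p$ with $u|_{A_i}\simeq v|_{A_i}$ and $X=B_1\cup\cdots\cup B_q$ with $v|_{B_j}\simeq w|_{B_j}$, so that $u\simeq w$ on each intersection $A_i\cap B_j$; the task is to compress this $pq$-member refinement into a cover with only $p+q$ members. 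This is exactly the point at which one invokes the classical separation argument behind the product inequality $\cat(X\times Y)\le\cat(X)+\cat(Y)-1$: since $X$ is normal and the two covers are finite, the shrinking lemma and subordinate partitions of unity yield an open cover $X=C_2\cup\cdots\cup C_{p+q}$ in which each $C_k$ splits as a union $\bigcup_{i+j=k}C_{ij}$ with $C_{ij}\subseteq A_i\cap B_j$ and with the pieces $\{C_{ij}:i+j=k\}$ mutually separated (each clopen in $C_k$). On each $C_{ij}$ the homotopy $u\simeq w$ is available, and because the $C_{ij}$ are clopen in $C_k$ these glue to a homotopy $u|_{C_k}\simeq w|_{C_k}$; hence $\{C_k\}$ witnesses $D(u,w)\le p+q-1\le p+q$.

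The main obstacle is precisely that compression step. Parts (i)--(iv) and the reduction in (v) are just bookkeeping with restrictions and pre/postcompositions of homotopies, but converting the $pq$-member product refinement into a $(p+q)$-member ``diagonal'' cover with separated pieces genuinely needs normality --- drop it and one is left only with the submultiplicative estimate $D(f\circ g,f'\circ g')\le D(f,f')\cdot D(g,g')$. In writing this up I would quote the separation lemma from the Lusternik--Schnirelmann literature (it is the heart of the proof of the product formula for $\cat$) rather than reproduce its proof.
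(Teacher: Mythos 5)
Your argument is correct and matches the standard proof: the paper states this proposition without proof (it is quoted from \cite{VirgosLois:2020}), and your handling of (i)--(iv) by restricting, splicing, and pre/post-composing homotopies over a fixed cover, together with the reduction of (v) to the normality-based triangle inequality via the separated-refinement (shrinking/partition-of-unity) lemma from the Lusternik--Schnirelmann literature, is exactly the route taken in that reference. The only point worth flagging is notational: the composites in the statement of (v) only typecheck if $f\circ g$ is read as ``$f$ followed by $g$,'' a convention you have silently (and reasonably) adopted, and your count even yields the sharper bound $D\le p+q-1$.
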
 

\begin{proposition}\cite{BorVer:2021}\label{p9}
	Each of the following conditions on the notion higher homotopic distance is satisfied:
	
	\textbf{i)} If $f_{1}, f_{2}, \cdots, f_{l}, \cdots, f_{k} : X \rightarrow X^{'}$ are any functions for $1 < l < k$, then \[\text{D}(f_{1}, \cdots, f_{l}) \leq \text{D}(f_{1}, \cdots, f_{k}).\]
	
	\textbf{ii)} If $f_{1}, \cdots, f_{k} : X \rightarrow X^{'}$ and $g_{1}, \cdots, g_{k} : X^{'} \rightarrow Y$ are functions such that $g_{i} \simeq g_{i+1}$ for every $i = 1, \cdots, k-1$, then \[\text{D}(g_{1} \circ f_{1}, \cdots, g_{k} \circ f_{k}) \leq \text{D}(f_{1}, \cdots, f_{k}).\]
	
	\textbf{iii)} Given two path-connected spaces $X$ and $X^{'}$. If $f_{1}, \cdots, f_{k} : X \rightarrow X^{'}$ are any functions, then \[\text{D}(f_{1}, \cdots, f_{k}) \leq \text{cat}(X).\]
	
	\textbf{iv)} If $f_{1}, \cdots, f_{k} : X \rightarrow X^{'}$ are any functions, then \[\text{D}(f_{1}, \cdots, f_{k}) \leq \text{TC}_{k}(X^{'}).\]
	
	\textbf{v)} Let $X \times \overline{X^{'}}$ be normal. If $f_{1}, \cdots, f_{k} : X \rightarrow X^{'}$ and $g_{1}, \cdots, g_{k} : \overline{X} \rightarrow \overline{X^{'}}$ are any functions, then
	\begin{eqnarray*}
		\text{D}(f_{1} \times g_{1}, \cdots, f_{k} \times g_{k}) \leq \text{D}(f_{1}, \cdots, f_{k}) + \text{D}(g_{1}, \cdots, g_{k}).
	\end{eqnarray*}
\end{proposition}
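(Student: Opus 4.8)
The plan is to prove (i)--(iv) by the same device --- transport a suitable open cover and compose homotopies --- and to isolate (v), the only genuinely technical item, which needs an Ostrand--Schwarz refinement. For (i), if $X=U_{1}\cup\cdots\cup U_{m}$ realises $\mathrm{D}(f_{1},\dots,f_{k})$, so that $f_{1}|_{U_{j}}\simeq\cdots\simeq f_{k}|_{U_{j}}$ on each $U_{j}$, then the first $l$ of these homotopies already give $f_{1}|_{U_{j}}\simeq\cdots\simeq f_{l}|_{U_{j}}$, and the same cover witnesses $\mathrm{D}(f_{1},\dots,f_{l})\le m$.

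For (ii), I would start from a cover $X=U_{1}\cup\cdots\cup U_{m}$ with $f_{1}|_{U_{j}}\simeq\cdots\simeq f_{k}|_{U_{j}}$ and, on each $U_{j}$, post-compose the homotopy $f_{i}|_{U_{j}}\simeq f_{i+1}|_{U_{j}}$ with $g_{i}$ while interleaving the hypothesised homotopies $g_{i}\simeq g_{i+1}$ (pulled back along $f_{i+1}|_{U_{j}}$). This yields a chain
\[
g_{1}f_{1}|_{U_{j}}\simeq g_{1}f_{2}|_{U_{j}}\simeq g_{2}f_{2}|_{U_{j}}\simeq g_{2}f_{3}|_{U_{j}}\simeq\cdots\simeq g_{k}f_{k}|_{U_{j}},
\]
so the same cover gives $\mathrm{D}(g_{1}f_{1},\dots,g_{k}f_{k})\le m$. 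For (iii), take a cover $X=U_{1}\cup\cdots\cup U_{m}$ with $m=\mathrm{cat}(X)$ and each inclusion $U_{j}\hookrightarrow X$ nullhomotopic, say homotopic to the constant at $x_{j}$; then $f_{i}|_{U_{j}}$ is homotopic to the constant map at $f_{i}(x_{j})$, and since $X'$ is path-connected all constant maps $U_{j}\to X'$ are mutually homotopic, whence $f_{1}|_{U_{j}}\simeq\cdots\simeq f_{k}|_{U_{j}}$ and $\mathrm{D}(f_{1},\dots,f_{k})\le\mathrm{cat}(X)$.

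For (iv) I would use the homotopic-distance description of higher topological complexity: $\mathrm{TC}_{k}(X')$ is the least $m$ for which $(X')^{k}$ is covered by $m$ open sets on each of which the $k$ coordinate projections $\mathrm{pr}_{1},\dots,\mathrm{pr}_{k}\colon (X')^{k}\to X'$ are pairwise homotopic (equivalently, the sectional category of the path-evaluation fibration). Fixing such a cover $(X')^{k}=W_{1}\cup\cdots\cup W_{m}$ and, given $f_{1},\dots,f_{k}\colon X\to X'$, putting $F=(f_{1},\dots,f_{k})\colon X\to (X')^{k}$ and $U_{j}=F^{-1}(W_{j})$, one has $f_{i}=\mathrm{pr}_{i}\circ F$, so pre-composing the homotopies $\mathrm{pr}_{i}|_{W_{j}}\simeq\mathrm{pr}_{i+1}|_{W_{j}}$ with $F|_{U_{j}}$ gives $f_{1}|_{U_{j}}\simeq\cdots\simeq f_{k}|_{U_{j}}$, and hence $\mathrm{D}(f_{1},\dots,f_{k})\le m=\mathrm{TC}_{k}(X')$.

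Finally, for (v), set $a=\mathrm{D}(f_{1},\dots,f_{k})$ and $b=\mathrm{D}(g_{1},\dots,g_{k})$ and fix covers $X=\bigcup_{j=1}^{a}U_{j}$, $\overline{X}=\bigcup_{l=1}^{b}V_{l}$ realising them. On each cell $U_{j}\times V_{l}$ the product homotopies $f_{i}|_{U_{j}}\times g_{i}|_{V_{l}}$ show that all the $(f_{i}\times g_{i})|_{U_{j}\times V_{l}}$ are homotopic, but the $ab$ cells are too many; the point is to compress them. Using the normality hypothesis, pick partitions of unity $\{\phi_{j}\}_{j=1}^{a}$, $\{\psi_{l}\}_{l=1}^{b}$ subordinate to the two covers and, for $r=2,\dots,a+b$, form an open set $W_{r}$ out of the cells $U_{j}\times V_{l}$ with $j+l=r$, carved up by comparing the functions $\phi_{j}(x)\psi_{l}(y)$ so that the parts coming from distinct cells are pairwise disjoint (this is the Ostrand--Schwarz refinement, and it is exactly where normality is used). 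Over the disjoint pieces of $W_{r}$ the product homotopies glue into a chain $(f_{1}\times g_{1})|_{W_{r}}\simeq\cdots\simeq(f_{k}\times g_{k})|_{W_{r}}$, and since the $a+b-1$ sets $W_{r}$ cover $X\times\overline{X}$ we get $\mathrm{D}(f_{1}\times g_{1},\dots,f_{k}\times g_{k})\le a+b-1$, which in particular gives the asserted inequality. The main obstacle is precisely this last point-set step --- arranging the disjointness of the same-index pieces so that the homotopies assemble --- whereas (i)--(iv) are essentially formal manipulations of covers and homotopies.
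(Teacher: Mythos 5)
This proposition is quoted from \cite{BorVer:2021} and the paper supplies no proof of its own, so there is nothing to compare against line by line; judged on its own terms, your argument is correct and is the standard one. Parts (i)--(iv) are indeed formal cover-and-homotopy manipulations (your (iv) uses the identification $\mathrm{TC}_{k}(X')=\mathrm{D}(\mathrm{pr}_{1},\dots,\mathrm{pr}_{k})$, which the paper records separately), and for (v) the partition-of-unity refinement into same-index-sum pieces is exactly the right mechanism, with normality entering only to produce the partitions of unity; note that in the paper's normalization (covers by $l$ sets rather than $l+1$) your construction actually gives the sharper bound $a+b-1$, of which the stated $a+b$ is a weakening.
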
 

\quad If $f : X \rightarrow Z$ is a fibration, then the \textit{Schwarz genus} \cite{Schwarz:1966} of it, denoted by $secat(f)$, is the smallest integer $l \geq 0$ for which
\begin{eqnarray*}
	Z = V_{1} \cup \cdots \cup V_{l}
\end{eqnarray*}
is a covering and the condition
\begin{eqnarray*}
	f \circ s_{i} = 1_{V_{i}}
\end{eqnarray*}
satisfies for each $V_{i}$, $i = 1, \cdots, l$. If there is no such a covering of $Z$, then we point out that $secat(f) = \infty$. We again note that we use $l$ open sets in the covering of $Y$ instead of $l+1$ for consistency of our results with Definition \ref{d3}. Just as homotopic distance, the notion Schwarz genus is an important common point in the exploration of motion planning problems, because each of TC, TC$_{n}$, and cat is motivated by the Schwarz genus of some special fibrations.
    
\begin{definition}\cite{Farber:2003}\label{d4}
	For any path-connected $Y$, the \textit{topological complexity} of $Y$, denoted by $TC(Y)$, is $secat(\pi)$, where $\pi : Y^{I} \rightarrow Y \times Y$, $\pi(\beta) = (\beta(0),\beta(1))$, is a path fibration.
\end{definition}

\begin{definition}\cite{Rudyak:2010}\label{d5}
	For any path-connected $Y$, if $\beta = (\beta_{1}, \cdots, \beta_{n}) : I \rightarrow Y^{n}$ is a multipath in $Y$ such that the starting point of each $\beta_{i}$, $i = 1, \cdots, n$, is the same. Then the \textit{higher topological complexity} of $Y$, denoted by $TC_{n}(Y)$, is $secat(e_{n})$, where $e_{n} : (Y^{n})^{I} \rightarrow Y^{n}$, $\beta \mapsto e_{n}(\beta) = (\beta_{1}(1), \cdots, \beta_{n}(1))$, is a fibration.
\end{definition}

\quad Denote $J_{n}$ by the wedge of $n$ copies of $[0,1]$ such that the initial point $0_{i}$ is the same point for each $i = 1, \cdots, n$. Then the function space $Y^{J_{n}}$ includes all multipaths in a path-connected space $Y$. Therefore, $TC_{n}(Y)$ is also defined as $secat(e_{n} : Y^{J_{n}} \rightarrow Y_{n})$, $e_{n}(\beta) = (\beta_{1}(1), \cdots, \beta_{n}(1))$ for a multipath $\beta = (\beta_{1}, \cdots, \beta_{n}) \in Y^{J_{n}}$. $TC_{n}(Y) = 1$ when $n = 1$. For $n=2$, 
Definition \ref{d4} coincides with Definition $\ref{d5}$. Moreover, $TC_{n}(Y) \leq TC_{r}(Y)$ if $n+1 \leq r$.
\begin{definition}\cite{Pavesic:2019}\label{d6}
	Let $g : Y \rightarrow Y^{'}$ be a surjective fibration. Then the \textit{topological complexity of} $g$, denoted by $TC(g)$, is $secat(\pi_{g})$, where $\pi_{g} : Y^{I} \rightarrow Y \times Y^{'}$ with $\pi_{g}(\beta) = (\beta(0),g \circ \beta(1))$ is a fibration.
\end{definition}
\quad For a fibration $g$, $TC(g)$ is also motivated by the homotopic distance \cite{VirgosLoisSaez:2021}. We observe the same thing with an alternative method in Theorem \ref{t2}, as well.

\begin{proposition} \cite{Pavesic:2019}\label{p7}
	The following hold:
	
	\textbf{i)} Let $g : Y \rightarrow Y^{'}$ be a surjective and continuous function. If $k : Z \rightarrow Y$ is a function such that there is a function $k^{'} : Y \rightarrow Z$ satisfying $k \circ k^{'} \simeq 1_{Y}$, then \[\text{TC}(g \circ k) \geq \text{TC}(g).\]
	
	\textbf{ii)} Let $g : Y \rightarrow Y^{'}$ be a surjective and continuous function. If $k : Z \rightarrow Y$ is a function such that there is a function $k^{'} : Y \rightarrow Z$ satisfying $k^{'} \circ k \simeq 1_{Z}$, then \[\text{TC}(g \circ k) \leq \text{TC}(g).\]
	
	\textbf{iii)} If $g : Y \rightarrow Y^{'}$ is a surjective and continuous map, and $k : Z \rightarrow Y$ is a fibration, then \[\text{TC}(g \circ k) \leq \text{TC}(g).\] 
\end{proposition}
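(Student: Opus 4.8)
The plan is to argue entirely with the Schwarz genus, transferring open coverings equipped with local sections along the natural base maps $k\times 1_{Y'}\colon Z\times Y'\to Y\times Y'$ and $k'\times 1_{Y'}\colon Y\times Y'\to Z\times Y'$ relating the base spaces of $\pi_{g\circ k}$ and $\pi_g$. Two elementary facts carry most of the weight: pulling a fibration back along an arbitrary map never increases its Schwarz genus, since a covering with local sections pulls back to one with local sections; and $secat$ is a fiber homotopy invariant, so it may be computed from a fibrational substitute. The only real content is that, under each stated hypothesis on $k$, the relevant pullback is comparable with $\pi_{g\circ k}$ or $\pi_g$ in the correct direction.

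For \textbf{(iii)}, $k$ being a fibration makes $k\times 1_{Y'}$ a fibration, and I would build a continuous local section of $\pi_{g\circ k}$ out of each local section of $\pi_g$ by path lifting: fix a lifting function $\Lambda$ for $k$, take a covering $Y\times Y'=V_1\cup\dots\cup V_l$ with sections $s_i$ of $\pi_g$, set $W_i=(k\times 1_{Y'})^{-1}(V_i)$, and verify that $\sigma_i(z,y')=\Lambda\bigl(z,\,s_i(k(z),y')\bigr)$ lies in $\pi_{g\circ k}^{-1}(z,y')$, because it starts at $z$ and $g\bigl(k(\sigma_i(z,y')(1))\bigr)=g\bigl(s_i(k(z),y')(1)\bigr)=y'$; hence $secat(\pi_{g\circ k})\le l$ and $TC(g\circ k)\le TC(g)$. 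For \textbf{(ii)}, where $k$ need not lift paths, I would keep the same sets $W_i$ but replace $\sigma_i$ by a section of a fibrational substitute $N$ of $\pi_{g\circ k}$, sending $(z,y')$ to the constant path $c_z$ at $z$ together with the path $t\mapsto\bigl(z,\,g(s_i(k(z),y')(t))\bigr)$ in $Z\times Y'$, which runs from $\pi_{g\circ k}(c_z)=(z,g(k(z)))$ to $(z,y')$; this yields $secat(N)\le l$, and $k'\circ k\simeq 1_Z$ enters precisely when one checks that $secat(N)$ computes $TC(g\circ k)$. Alternatively, once $TC$ is written as a homotopic distance, both \textbf{(ii)} and \textbf{(iii)} follow at once from Proposition~\ref{p8}(ii) applied to $k\times 1_{Y'}$, since composing $g\circ k$ with the projection onto $Z$ gives $g$ composed with the projection onto $Y$ and then with $k\times 1_{Y'}$, while the two projections onto $Y'$ agree.

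For \textbf{(i)} I would run the mirror argument: take a covering $Z\times Y'=W_1\cup\dots\cup W_l$ with sections $\sigma_i$ of $\pi_{g\circ k}$, put $V_i=(k'\times 1_{Y'})^{-1}(W_i)$, fix a homotopy $H$ from $1_Y$ to $k\circ k'$, and let $s_i(y,y')$ be the concatenation of the path $t\mapsto H(y,t)$ from $y$ to $k(k'(y))$ with the path $k\circ\sigma_i(k'(y),y')$ from $k(k'(y))$ to $k\bigl(\sigma_i(k'(y),y')(1)\bigr)$; this is a continuous section of $\pi_g$ over $V_i$, as it starts at $y$ and its endpoint $w$ satisfies $g(w)=g\bigl(k(\sigma_i(k'(y),y')(1))\bigr)=y'$, so $TC(g)\le TC(g\circ k)$. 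The main obstacle in all three parts is local rather than global: producing the transferred sections as honest continuous maps that land in the prescribed fibers. In \textbf{(iii)} this is exactly where the fibration hypothesis on $k$ is consumed, through the continuity and compatibility of the lifting function $\Lambda$; in \textbf{(i)} one must arrange the concatenation so that the path starts at $y$ while the endpoint condition $g(w)=y'$ survives, which it does because the endpoint lies in $k(Z)$, where $g$ depends only on $g\circ k$; and in \textbf{(ii)} the delicate point is the bookkeeping needed to make $TC(g\circ k)$ meaningful when $g\circ k$ need not literally be a fibration, which is precisely the role of $k'\circ k\simeq 1_Z$.
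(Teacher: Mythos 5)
This proposition is imported from \cite{Pavesic:2019} and the paper offers no proof of its own, so there is nothing internal to compare against; I can only assess your argument on its merits. Your treatments of (i) and (iii) are correct and are the standard ones. In (iii) the continuous lifting function $\Lambda$ of the Hurewicz fibration $k$ really does turn each strict local section $s_i$ of $\pi_g$ into a strict local section $\sigma_i=\Lambda(\,\cdot\,,s_i\circ(k\times 1_{Y'}))$ of $\pi_{g\circ k}$ over $(k\times 1_{Y'})^{-1}(V_i)$, and in (i) the concatenation of $H(y,\cdot)$ with $k\circ\sigma_i(k'(y),y')$ starts at $y$ and ends at a point of $k(Z)$ whose image under $g$ is forced to be $y'$, exactly as you say. (You are silently assuming, as the source does, that $g\circ k$ is surjective so that $\mathrm{TC}(g\circ k)$ is meaningful.)

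Part (ii), however, has a genuine gap at its decisive step. Your construction produces, over $W_i$, a section of the fibrational substitute $N$ of $\pi_{g\circ k}$ (a path in $Z\times Y'$ from $\pi_{g\circ k}(c_z)$ to $(z,y')$), not a section of $\pi_{g\circ k}$ itself, and you then assert that ``$k'\circ k\simeq 1_Z$ enters precisely when one checks that $secat(N)$ computes $\mathrm{TC}(g\circ k)$'' without carrying out that check. That is exactly where the difficulty sits: $secat(N)\leq secat(\pi_{g\circ k})$ is automatic, but the reverse inequality requires rigidifying a homotopy section into a strict one, which needs a lifting property for $g\circ k$ (or $\pi_{g\circ k}$) that a left homotopy inverse of $k$ does not visibly supply. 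If you try to rigidify directly --- send $(z,y')$ to the concatenation of $G(z,\cdot)$ with $k'\circ s_i(k(z),y')$ --- the endpoint $w=k'(s_i(k(z),y')(1))$ satisfies $g(k(w))=g(k(k'(u)))$ with $g(u)=y'$, and only $k\circ k'\simeq 1_Y$ (which is \emph{not} hypothesized in (ii)) would let you conclude $g(k(w))=y'$. Your fallback via homotopic distance has the same problem in disguise: the identification $\mathrm{TC}(g\circ k)=\mathrm{D}((g\circ k)\circ p_1,p_2)$ (Theorem \ref{t2}) is only available when $g\circ k$ is a fibration, and, tellingly, that route never uses $k'\circ k\simeq 1_Z$ at all --- a sign that it is proving a different statement under different hypotheses rather than (ii) as stated. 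To repair (ii) you must either exhibit the rigidification that the hypothesis $k'\circ k\simeq 1_Z$ enables, or restrict to the setting (as in Definition \ref{d6}) where the relevant maps are fibrations.
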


\quad Recall that two functions $g : Y \rightarrow Y^{'}$ and $g^{'} : Z \rightarrow Y^{'}$ are \textit{fiber homotopy equivalent} (FHE-equivalent) if the diagram 
\begin{displaymath}
\xymatrix{
	Y \ar[dr]_{g} \ar@<1ex>[rr]^u
	& & Z \ar@<1ex>[ll]^v \ar[dl]^{g^{'}} \\
	& Y^{'} & }
\end{displaymath}
is commutative with $u \circ v \simeq 1_{Z}$ and $v \circ u \simeq 1_{Y}$.

\begin{theorem} \cite{Pavesic:2019}\label{t4}
	Let $g : Y \rightarrow Y^{'}$ and $g^{'} : Z \rightarrow Y^{'}$ be maps. Given any fibrewise maps $u : Y \rightarrow Z$ and $v : Z \rightarrow Y$ with $u \circ v \simeq 1_{Z}$ and $v \circ u \simeq 1_{Y}$, we have \[\text{TC}(g) = \text{TC}(g^{'}).\]
\end{theorem}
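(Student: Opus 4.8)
The plan is to derive the equality from two symmetric applications of Proposition \ref{p7}(ii), exploiting that the fibrewise hypotheses make the relevant triangles strictly commute, so that $g = g' \circ u$ and $g' = g \circ v$ as maps (not merely up to homotopy).

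First I would prove $\text{TC}(g') \le \text{TC}(g)$. Since $u$ and $v$ are fibrewise over $Y'$, we have $g' = g \circ v$, hence $\text{TC}(g') = \text{TC}(g \circ v)$. Now apply Proposition \ref{p7}(ii) to the surjective continuous map $g : Y \to Y'$ with the function $k = v : Z \to Y$ and $k' = u : Y \to Z$; the required relation $k' \circ k = u \circ v \simeq 1_Z$ is exactly one of our hypotheses. The proposition then gives $\text{TC}(g \circ v) \le \text{TC}(g)$, i.e. $\text{TC}(g') \le \text{TC}(g)$.

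The reverse inequality is obtained by the same argument with the roles of $(g, Y, u)$ and $(g', Z, v)$ interchanged: from $g = g' \circ u$ we get $\text{TC}(g) = \text{TC}(g' \circ u)$, and applying Proposition \ref{p7}(ii) to $g' : Z \to Y'$ with $k = u : Y \to Z$ and $k' = v : Z \to Y$ (using $v \circ u \simeq 1_Y$) yields $\text{TC}(g' \circ u) \le \text{TC}(g')$, that is, $\text{TC}(g) \le \text{TC}(g')$. Combining the two inequalities gives $\text{TC}(g) = \text{TC}(g')$.

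I expect the only delicate points to be bookkeeping rather than substantive: one must check that $g$ and $g'$ are genuinely surjective (this is forced once one of them is, since $g = g' \circ u$ makes $g'$ onto and $g' = g \circ v$ makes $g$ onto, and in any case surjectivity is implicit in $\text{TC}(g)$ and $\text{TC}(g')$ being defined via Definition \ref{d6}) and that the composites $g \circ v$ and $g' \circ u$ appearing above are literally the maps whose topological complexity we are comparing. If instead a self-contained argument were wanted, one could work directly with the Schwarz genera of $\pi_{g} : Y^{I} \to Y \times Y'$ and $\pi_{g'} : Z^{I} \to Z \times Y'$: given local sections of $\pi_{g'}$ over an open cover of $Z \times Y'$, pull the cover back along $(y,y') \mapsto (u(y),y')$, transport the resulting paths from $Z$ to $Y$ via $v$, and concatenate with the tracking homotopy $v \circ u \simeq 1_Y$ so that each path starts at the prescribed point $y$; the point to verify there is continuity of the concatenated sections and that the terminal point still maps to $y'$ under $g$, which holds because $g \circ v = g'$. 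That route is noticeably more laborious, so the approach via Proposition \ref{p7} is the one I would actually carry out.
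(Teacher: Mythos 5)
Your argument is correct: the fibrewise hypotheses give the strict factorizations $g' = g\circ v$ and $g = g'\circ u$, and the two symmetric applications of Proposition \ref{p7}(ii) (with $u\circ v\simeq 1_{Z}$ and $v\circ u\simeq 1_{Y}$ respectively) yield the two inequalities. The paper itself states this theorem as a quoted result from \cite{Pavesic:2019} without reproducing a proof, and your derivation is exactly the standard one that the cited source's composition properties are designed to deliver (one could even get both inequalities from the single identity $g' = g\circ v$ by combining parts (i) and (ii) of Proposition \ref{p7}), so nothing further is needed.
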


\quad Denote $P_{0}Y$ as the space of entire paths in a topological space $Y$ such that the starting points of the paths are the same point. Then the \textit{LS-category} \cite{CorneaLupOpTan:2003} of $Y$, denoted by $cat(Y)$, is $secat(\pi_{Y})$, where $\pi_{Y} : P_{0}Y \rightarrow Y$, $\pi_{Y}(\beta) = \beta(1)$, is a fibration. If $g : Y \rightarrow Y^{'}$ is a function, then the \textit{LS-category} \cite{CorneaLupOpTan:2003} of $g$, denoted by $cat(g)$, is the smallest integer $m \geq 0$ for which
\begin{eqnarray*}
	Y = W_{1} \cup \cdots \cup W_{m}
\end{eqnarray*}
is a covering such that $g|_{W_{i}}$ is nullhomotopic for each $i = 1, \cdots, m$. In a similar manner, for consistency, we use $m$ open sets for the cover of $X$ instead of $m+1$ open sets in the definition of $cat(g)$.

\quad Using the higher homotopic distance; alternative expressions of $TC(Y)$, $TC_{n}(Y)$, $TC(g)$, $cat(Y)$ and $cat(g)$ are given by the next Theorem:

\begin{theorem} \cite{VirgosLois:2020,BorVer:2021,VirgosLoisSaez:2021}
Each of the following holds:

\textbf{i)} Let $Y$ be path-connected. Then \[\text{TC}(Y) = \text{D}(p_{1},p_{2}),\] where $p_{1}, p_{2} : Y \times Y \rightarrow Y$ are functions with $p_{1}(y,y^{'}) = y$ and $p_{2}(y,y^{'}) = y^{'}$, respectively.

\textbf{ii)} Let $Y$ be path-connected. Then \[\text{TC}_{n}(Y) = \text{D}(p_{1}, \cdots, p_{n}),\] where $p_{i} : Y^{n} \rightarrow Y$ is the projection for each $j \in \{1, \cdots, n\}$.

\textbf{iii)} If $g : Y \rightarrow Y^{'}$ is a surjective fibration, then \[\text{TC}(g) = \text{D}(g \circ \pi_{1},\pi_{2}),\] where $\pi_{1} : Y \times Y^{'} \rightarrow Y$ and $\pi_{2} : Y \times Y^{'} \rightarrow Y$ are projection maps.

\textbf{iv)} For any space $Y$,
\[\text{cat}(Y) = \text{D}(j_{1},j_{2}),\] 
where $j_{1} : Y \rightarrow Y \times Y$ is defined with $j_{1}(y) = (y,y_{0})$ and $j_{2} : Y \rightarrow Y \times Y$ is defined with $j_{2}(y) = (y_{0},y)$ for the point $y_{0} \in Y$.

\textbf{v)} If $g : Y \rightarrow Y^{'}$ is a function such that $Y^{'}$ is path-connected, then \[\text{cat}(g) = \text{D}(g,\ast),\] where $\ast$ is a constant map.
\end{theorem}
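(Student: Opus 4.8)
The plan is to derive all five identities from a single observation: for each of the defining fibrations $p\colon E\to B$, a local section of $p$ over an open set $V\subseteq B$ is, via the exponential law for mapping spaces, literally the same datum as a homotopy (or a finite compatible family of homotopies) among the relevant maps restricted to $V$. Since $secat(p)$ and the homotopic distance on the right are each by definition the least number of open sets in a cover of $B$ over every member of which the appropriate data exist, the two quantities must agree. I would organise the verification into three blocks, (i)+(ii), (iii), and (iv)+(v), and flag in advance that (iii) is the only one not already formal once the exponential law is granted.

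For (i) and (ii) I would use the model $e_n\colon Y^{J_n}\to Y^n$ of Definition \ref{d5}, which for $n=2$ coincides with the path fibration of Definition \ref{d4}. A section $s$ with $e_n\circ s=1_V$ assigns to each $(y_1,\dots,y_n)\in V$ an $n$-fold multipath whose branches share a common initial point and whose $i$-th branch terminates at $y_i$; applying the exponential law branchwise, this is precisely a map $c\colon V\to Y$ together with homotopies $c|_V\simeq p_i|_V$ for $i=1,\dots,n$. Such data exist if and only if $p_1|_V\simeq\cdots\simeq p_n|_V$: one implication is trivial, and for the other one takes $c=p_1|_V$. Hence the covers of $Y^n$ admitting a local section of $e_n$ over every member are exactly those realising $\text{D}(p_1,\dots,p_n)$, so $TC_n(Y)=\text{D}(p_1,\dots,p_n)$, and specialising $n=2$ gives $TC(Y)=\text{D}(p_1,p_2)$.

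For (iii) the fibration is $\pi_g\colon Y^I\to Y\times Y'$ of Definition \ref{d6}, with $\pi_g(\beta)=(\beta(0),g(\beta(1)))$, and the two target maps are $g\circ\pi_1$ and $\pi_2$ into $Y'$, where $\pi_1,\pi_2$ are the projections from $Y\times Y'$. From a local section $s$ over $V$ one gets a homotopy $g\circ\pi_1|_V\simeq\pi_2|_V$ by post-composing with $g$, namely $(y,y',t)\mapsto g(s(y,y')(t))$. Conversely, given a homotopy $H\colon V\times I\to Y'$ with $H_0=g\circ(\pi_1|_V)$ and $H_1=\pi_2|_V$, the map $\pi_1|_V$ lifts $H_0$ through $g$, so since $g$ is a fibration the homotopy lifting property yields $\widetilde H\colon V\times I\to Y$ with $g\circ\widetilde H=H$ and $\widetilde H_0=\pi_1|_V$; then $s(y,y')(t):=\widetilde H(y,y',t)$ is a local section of $\pi_g$ over $V$. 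This matches local sections with homotopies and gives $TC(g)=\text{D}(g\circ\pi_1,\pi_2)$, the statement also recorded as Theorem \ref{t2}.

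For (iv), a local section of $\pi_Y\colon P_0Y\to Y$, $\pi_Y(\beta)=\beta(1)$, over $W$ is a continuous choice of a path from the basepoint to each point of $W$; reparametrising in $t$, this is exactly a nullhomotopy of the inclusion $W\hookrightarrow Y$, and the first coordinate of a homotopy $j_1|_W\simeq j_2|_W$ is such a nullhomotopy, while conversely a nullhomotopy together with its reverse assembles into a homotopy $j_1|_W\simeq j_2|_W$; hence $cat(Y)=\text{D}(j_1,j_2)$. Part (v) is immediate from the definition of $cat(g)$, since $g|_{W_i}$ is nullhomotopic if and only if $g|_{W_i}\simeq\ast|_{W_i}$, using path-connectedness of $Y'$ to identify all constant maps up to homotopy, so the covers witnessing $cat(g)$ are exactly those witnessing $\text{D}(g,\ast)$. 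The main obstacle is part (iii): there a local section takes values in paths of $Y$ whereas the natural homotopy lives in $Y'$, so the two cannot be matched by the exponential law alone, and bridging them forces the homotopy lifting property of $g$, which is precisely where the hypothesis that $g$ be a surjective fibration is used; the remaining parts are bookkeeping with the exponential law and the definitions of $secat$ and $\text{D}$.
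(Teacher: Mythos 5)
The paper does not prove this theorem at all: it is recalled in the Preliminaries with a citation to \cite{VirgosLois:2020,BorVer:2021,VirgosLoisSaez:2021}, so there is no in-paper argument to compare against. Your proof is correct, and it is in substance the proof given in those references: there the key identity is that $\text{D}(f_{1},\dots,f_{n})$ equals the Schwarz genus of the pullback of the (multi)path fibration along $(f_{1},\dots,f_{n})$, and your section-to-homotopy dictionary via the exponential law is exactly the concrete content of that identity, specialized to $e_{n}$, $\pi_{g}$ and $\pi_{Y}$ in turn. You also correctly isolate the one non-formal step: in (iii) the section lives in paths of $Y$ while the homotopy lives in $Y'$, and the converse direction genuinely requires the homotopy lifting property of $g$ applied to the lift $\pi_{1}|_{V}$ of $g\circ\pi_{1}|_{V}$ (surjectivity of $g$ plays no role in that step). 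Two minor points worth making explicit if you write this up: in (ii), after choosing $c=p_{1}|_{V}$ you should note that the $n$ homotopies $H_{i}\colon V\times I\to Y$ assemble into a continuous map $V\to Y^{J_{n}}$ by the exponential law (legitimate since $I$, and hence $J_{n}$, is locally compact Hausdorff); and in (iv) the section of $\pi_{Y}$ produces a nullhomotopy ending at the fixed basepoint of $P_{0}Y$, which is what lets it be paired with its reverse to produce the homotopy $j_{1}|_{W}\simeq j_{2}|_{W}$ landing exactly on $y_{0}$ in each coordinate.
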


\section{The Higher Topological Complexity Using The Homotopic Distance}
\label{sec:2}

\quad Let $f : X \rightarrow Y$ be a fibration, $p_{1} : Y \times Y \rightarrow Y$ be a projection defined by $p_{1}(y,y^{'}) = y$, and $\pi_{2} : X \times Y \rightarrow Y$ be another projection defined by $\pi_{2}(x,y) = y$. In Theorem 2.8 of \cite{VirgosLois:2020}, if we take $X \times Y$, $p_{1} \circ (f \times 1_{Y})$, and $\pi_{2}$ instead of $X$, $f$ and $g$, respectively, then we find that 
\begin{displaymath}
\xymatrix{
	P \ar[r] \ar[d]_{q_{1}} &
	PY \ar[d]^{\pi} \\
	X \times Y \ar[r]_{h} & Y \times Y}
\end{displaymath}
is commutative, where $h$ is the map $(p_{1} \circ (f \times 1_{Y}),\pi_{2}) : X \times Y \rightarrow Y \times Y$. This proves that
\begin{equation}
	D(p_{1} \circ (f \times 1_{Y}),\pi_{2}) = secat(q_{1}).
\end{equation}

\quad On the other hand, in Theorem 4.9 of \cite{Pavesic:2019}, if we take $p_{1} \circ (f \times 1_{Y})$ instead of the fibration $f$, then we observe that
\begin{displaymath}
\xymatrix{
	X \sqcap PY \ar[r] \ar[d]_{q_{2}} &
	PY \ar[d]^{\pi} \\
	X \times Y \ar[r]_{k} & Y \times Y}
\end{displaymath}
is also commutative, where $k$ is $(p_{1} \circ (f \times 1_{Y})) \times 1_{Y} : X \times Y \rightarrow Y \times Y$. This gives 
\begin{equation}
	TC(p_{1} \circ (f \times 1_{Y})) = secat(q_{2}).
\end{equation}

\begin{theorem} \label{t1}
	For a fibration $f : X \rightarrow Y$, if $p_{1} : Y \times Y \rightarrow Y$ and $\pi_{2} : X \times Y \rightarrow Y$ are projections with $p_{1}(y,y^{'}) = y$ and $\pi_{2}(x,y) = y$, respectively, then
	\begin{eqnarray*}
		\text{D}(p_{1} \circ (f \times 1_{Y}),\pi_{2}) = \text{TC}(p_{1} \circ (f \times 1_{Y})).
	\end{eqnarray*}
\end{theorem}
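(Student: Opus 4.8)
The plan is to leverage the two identities established immediately before the statement, namely $\text{D}(p_{1} \circ (f \times 1_{Y}),\pi_{2}) = secat(q_{1})$ and $\text{TC}(p_{1} \circ (f \times 1_{Y})) = secat(q_{2})$, where $q_{1}$ and $q_{2}$ are the left-hand vertical maps in the two commutative squares displayed above. With these in hand, the theorem reduces to the single claim that $secat(q_{1}) = secat(q_{2})$.

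To prove that, I would compare the two squares directly. In both of them the right-hand vertical map is the \emph{same} path fibration $\pi : PY \rightarrow Y \times Y$; thus $q_{1}$ is the pullback of $\pi$ along $h = (p_{1} \circ (f \times 1_{Y}),\pi_{2})$, while $q_{2}$ is the pullback of $\pi$ along $k = (p_{1} \circ (f \times 1_{Y})) \times 1_{Y}$. Unwinding the definitions, $(f \times 1_{Y})(x,y) = (f(x),y)$, so $p_{1} \circ (f \times 1_{Y})$ sends $(x,y)$ to $f(x)$; hence $h$ sends $(x,y)$ to $(f(x),y)$, and $k$, being built from the very same map by keeping the remaining coordinate fixed, sends its argument to $(f(x),y)$ as well. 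So $h$ and $k$ describe the same map into $Y \times Y$, the only possible discrepancy being a redundant $Y$-factor carried along in Pavesic's construction; that factor is a trivial (globally sectioned) one and hence is irrelevant to the Schwarz genus.

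Once $h$ and $k$ are identified, $q_{1}$ and $q_{2}$ are pullbacks of one fibration along one map, hence are canonically isomorphic as fibrations over their common base, so that a partial section of either one over an open set yields a partial section of the other over that same set; therefore $secat(q_{1}) = secat(q_{2})$. Feeding this back into the two identities above gives $\text{D}(p_{1} \circ (f \times 1_{Y}),\pi_{2}) = \text{TC}(p_{1} \circ (f \times 1_{Y}))$. If one prefers to verify only that $h$ and $k$ are homotopic instead of equal, the conclusion is unchanged, since pulling a fibration back along homotopic maps produces fiber homotopy equivalent fibrations and the Schwarz genus is an invariant of fiber homotopy equivalence.

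The step I expect to require the most care is the comparison in the middle paragraph: the square defining $q_{1}$ is produced by the homotopic-distance machinery of \cite{VirgosLois:2020}, while the square defining $q_{2}$ is produced by Pavesic's machinery of \cite{Pavesic:2019}, and the two are initially set up over base spaces that differ. One must check that this difference is precisely a trivial factor and that, after deleting it, $h$ and $k$ coincide as maps to $Y \times Y$; granting that, everything else is formal.
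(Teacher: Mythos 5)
Your proposal is correct and follows essentially the same route as the paper: both arguments reduce the statement via (1) and (2) to the single claim $secat(q_{1}) = secat(q_{2})$ and then identify $q_{1}$ and $q_{2}$ as the same pullback of the path fibration $\pi : PY \rightarrow Y \times Y$ over the base $X \times Y$. The paper performs the identification concretely by exhibiting mutually inverse maps $u(x,y,\alpha) = (x,\alpha)$ and $v(x,\alpha) = (x,\alpha(1),\alpha)$, i.e.\ by explicitly collapsing the redundant coordinate $y = \alpha(1)$ that you describe as the ``trivial factor'' in Pavesic's construction.
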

\vspace*{-0.6cm}
\begin{proof}
	By (1) and (2), it is enough to show that $q_{1}$ and $q_{2}$ are fibre homotopy equivalent fibrations. Define the maps $u : P \rightarrow X \sqcap PY$ and $v : X \sqcap PY \rightarrow P$ with $u(x,y,\alpha) = (x,\alpha)$ and $v(x,\alpha) = (x,\alpha(1),\alpha)$, respectively. Since $P \subset (X \times Y) \times PY$ is given by $\{((x,y),\alpha) \ \ : \ \ \alpha(0) = p_{1} \circ (f \times 1_{Y})(x,y), \alpha(1) = \pi_{2}(x,y)\}$, we have that $\alpha(0) = f(x)$ and $\alpha(1) = y$. It follows that
	\begin{eqnarray*}
		u \circ v = 1_{X \sqcap PY} \ \ \ \text{and} \ \ \ v \circ u = 1_{P}.
	\end{eqnarray*}
    Thus, $q_{1}$ and $q_{2}$ are fibre homotopy equivalent to each other.
\end{proof}

\begin{corollary}
		For a fibration $f : X \rightarrow Y$, if $p_{1} : Y \times Y \rightarrow Y$ and $\pi_{2} : X \times Y \rightarrow Y$ are projections with $p_{1}(y,y^{'}) = y$ and $\pi_{2}(x,y) = y$, respectively, then
	\begin{eqnarray*}
		\text{D}(p_{1} \circ (f \times 1_{Y}),\pi_{2}) \leq \text{TC}(Y).
	\end{eqnarray*}
    Moreover, if $Y$ is contractible, then we conclude that $D(p_{1} \circ (f \times 1_{Y}),\pi_{2}) = 1$.
\end{corollary}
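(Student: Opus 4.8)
The plan is to realize both maps appearing in $\text{D}(p_{1} \circ (f \times 1_{Y}),\pi_{2})$ as right-composites of the two projections $p_{1},p_{2} : Y \times Y \rightarrow Y$ with one common map, and then invoke the monotonicity of the homotopic distance under precomposition. First I would observe that if $p_{2} : Y \times Y \rightarrow Y$ denotes the second projection, $p_{2}(y,y^{'}) = y^{'}$, then $p_{2} \circ (f \times 1_{Y}) = \pi_{2}$, since $(f \times 1_{Y})(x,y) = (f(x),y)$ and hence $p_{2}(f(x),y) = y = \pi_{2}(x,y)$. Thus both $p_{1} \circ (f \times 1_{Y})$ and $\pi_{2}$ are obtained from $p_{1}$ and $p_{2}$ by composing on the right with the common map $k := f \times 1_{Y} : X \times Y \rightarrow Y \times Y$.

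Next I would apply Proposition \ref{p8}(ii) with this $k$, which gives
\[
\text{D}(p_{1} \circ (f \times 1_{Y}),\pi_{2}) = \text{D}(p_{1} \circ k, p_{2} \circ k) \leq \text{D}(p_{1},p_{2}).
\]
Finally, by the homotopic-distance description of topological complexity recorded above, $\text{D}(p_{1},p_{2}) = \text{TC}(Y)$, so the claimed inequality $\text{D}(p_{1} \circ (f \times 1_{Y}),\pi_{2}) \leq \text{TC}(Y)$ follows at once.

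For the ``moreover'' part, suppose $Y$ is contractible. Then every continuous map with target $Y$ is nullhomotopic; in particular $p_{1} \circ (f \times 1_{Y})$ and $\pi_{2}$ are both nullhomotopic as maps $X \times Y \rightarrow Y$, hence homotopic to one another on the single open set $V_{1} = X \times Y$. Therefore $\text{D}(p_{1} \circ (f \times 1_{Y}),\pi_{2}) \leq 1$. Since $X \times Y$ is nonempty, any covering witnessing the distance must use at least one open set, so the distance is $\geq 1$; combining the two bounds gives the value $1$. (Equivalently, one may note that $\text{TC}(Y) = 1$ for contractible $Y$, because then $p_{1} \simeq p_{2}$, and appeal to the first part together with the inequality $\text{D} \geq 1$.)

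There is essentially no obstacle here: the statement is a short consequence of Proposition \ref{p8}(ii) and the formula $\text{TC}(Y) = \text{D}(p_{1},p_{2})$. The only points requiring a moment's attention are the bookkeeping identity $\pi_{2} = p_{2} \circ (f \times 1_{Y})$ and the normalization convention that a covering uses at least one open set, which is precisely what forces the value $1$ rather than $0$ in the contractible case.
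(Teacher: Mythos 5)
Your proof is correct, but it takes a different route from the paper's. The key observation you make --- that $\pi_{2} = p_{2} \circ (f \times 1_{Y})$, so that both maps in the distance arise by precomposing the two projections $p_{1}, p_{2} : Y \times Y \rightarrow Y$ with the single map $f \times 1_{Y}$ --- lets you conclude directly from Proposition \ref{p8}(ii) that $\text{D}(p_{1} \circ (f \times 1_{Y}), \pi_{2}) \leq \text{D}(p_{1},p_{2}) = \text{TC}(Y)$. The paper instead first invokes Theorem \ref{t1} to rewrite the distance as $\text{TC}(p_{1} \circ (f \times 1_{Y}))$, then uses Proposition \ref{p7}(iii) (composing a surjective map with the fibration $f \times 1_{Y}$ does not increase $\text{TC}$) together with Pave\v{s}i\'{c}'s identification $\text{TC}(p_{1}) = \text{TC}(Y)$. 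Your argument is shorter, stays entirely inside the homotopic-distance formalism, and --- worth noting --- never uses that $f$ is a fibration, since Proposition \ref{p8}(ii) holds for arbitrary continuous maps; so you actually prove the inequality in greater generality. The paper's route has the virtue of exercising the equality $\text{D}(p_{1} \circ (f \times 1_{Y}),\pi_{2}) = \text{TC}(p_{1} \circ (f \times 1_{Y}))$ it has just established, keeping the corollary within the $\text{TC}$-of-a-map framework. For the contractible case the paper simply asserts the conclusion; your fuller justification (both maps nullhomotopic into a path-connected space, hence homotopic on the single set $X \times Y$, and the distance is at least $1$ by the covering convention) is the argument the paper leaves implicit, and it is sound.
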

    
\begin{proof}
	By Theorem \ref{t1}, we obtain
	\begin{eqnarray*}
		D(p_{1} \circ (f \times 1_{Y}),\pi_{2}) = TC(p_{1} \circ (f \times 1_{Y})).
	\end{eqnarray*}
	Since $f$ is a fibration, so is $f \times 1_{Y}$. Using Proposition \ref{p7} (iii), and Example 4.10 in \cite{Pavesic:2019}, we get \begin{eqnarray*}
		TC(p_{1} \circ (f \times 1_{Y})) \leq TC(p_{1}) = TC(Y).
	\end{eqnarray*}
	Finally, the contractibility of $Y$ implies that $D(p_{1} \circ (f \times 1_{Y}),\pi_{2}) = 1$. 
\end{proof}

\begin{theorem}\label{t2}
	For a fibration $f : X \rightarrow Y$, if $\pi_{1} : X \times Y \rightarrow X$ and $\pi_{2} : X \times Y \rightarrow Y$ are projections with $\pi_{1}(x,y) = x$ and $\pi_{2}(x,y) = y$, respectively, we have
	\begin{eqnarray*}
		\text{D}(f \circ \pi_{1},\pi_{2}) = \text{TC}(f).
	\end{eqnarray*}
\end{theorem}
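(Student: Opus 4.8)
The plan is to reduce the statement to Theorem~\ref{t1} together with Proposition~\ref{p7}. The crucial elementary observation is that the two maps $f\circ\pi_{1}$ and $p_{1}\circ(f\times 1_{Y})$ from $X\times Y$ to $Y$ are literally the same map: each sends $(x,y)$ to $f(x)$, since $\pi_{1}(x,y)=x$ and $p_{1}(f(x),y)=f(x)$. Hence $\text{D}(f\circ\pi_{1},\pi_{2})=\text{D}(p_{1}\circ(f\times 1_{Y}),\pi_{2})$, and Theorem~\ref{t1} rewrites the right-hand side as $\text{TC}(p_{1}\circ(f\times 1_{Y}))=\text{TC}(f\circ\pi_{1})$. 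Thus everything comes down to proving the single identity $\text{TC}(f\circ\pi_{1})=\text{TC}(f)$.

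First I would obtain $\text{TC}(f\circ\pi_{1})\le\text{TC}(f)$ from Proposition~\ref{p7}(iii) applied with $g=f\colon X\to Y$ and $k=\pi_{1}\colon X\times Y\to X$: the product projection $\pi_{1}$ is a fibration and $f$ is surjective and continuous, so $f\circ\pi_{1}$ is again a surjective fibration (so that $\text{TC}(f\circ\pi_{1})$ is defined in the sense of Definition~\ref{d6}) and the inequality follows. For the reverse inequality I would invoke Proposition~\ref{p7}(i) with the same $g$ and $k$, taking $k'\colon X\to X\times Y$, $k'(x)=(x,y_{0})$ for a fixed $y_{0}\in Y$; then $\pi_{1}\circ k'=1_{X}$, which gives $\text{TC}(f\circ\pi_{1})\ge\text{TC}(f)$. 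Combining the two and substituting back through the chain of equalities above yields $\text{D}(f\circ\pi_{1},\pi_{2})=\text{TC}(f)$. Since this route is only a sequence of applications of results already in hand, plus the trivial identification $f\circ\pi_{1}=p_{1}\circ(f\times 1_{Y})$, I do not anticipate any genuine obstacle here.

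If a self-contained argument in the style of Theorem~\ref{t1} is preferred, the alternative is to form the pullback $Q=\{((x,y),\gamma)\ :\ \gamma(0)=f(x),\ \gamma(1)=y\}$ of the path fibration $\pi\colon Y^{I}\to Y\times Y$ along $(f\circ\pi_{1},\pi_{2})\colon X\times Y\to Y\times Y$, so that $\text{D}(f\circ\pi_{1},\pi_{2})=secat(q)$ for the projection $q\colon Q\to X\times Y$, and then to compare $q$ with the fibration $\pi_{f}\colon X^{I}\to X\times Y$, $\pi_{f}(\beta)=(\beta(0),f(\beta(1)))$, which computes $\text{TC}(f)$. Using a lifting function of the fibration $f$ one defines a fibrewise $v\colon Q\to X^{I}$ (lift $\gamma$ to a path in $X$ starting at $x$) together with a fibrewise $u\colon X^{I}\to Q$, $u(\beta)=((\beta(0),f(\beta(1))),f\circ\beta)$, and one checks that $u\circ v=1_{Q}$ holds on the nose. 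The one delicate point — and the main obstacle on this route — is to produce a fibrewise homotopy $v\circ u\simeq 1_{X^{I}}$, which is exactly the uniqueness up to homotopy of path lifts along $f$; once that is in place, $q$ and $\pi_{f}$ are fibre homotopy equivalent and therefore have equal Schwarz genus, giving $\text{D}(f\circ\pi_{1},\pi_{2})=\text{TC}(f)$ again.
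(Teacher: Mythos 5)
Your first route is essentially the paper's own proof: the paper likewise identifies $f\circ\pi_{1}$ with $p_{1}\circ(f\times 1_{Y})$, applies Theorem~\ref{t1} to get $\text{D}(f\circ\pi_{1},\pi_{2})=\text{TC}(f\circ\pi_{1})$, and then sandwiches $\text{TC}(f\circ\pi_{1})$ between $\text{TC}(f)$ on both sides via Proposition~\ref{p7}(i) and (iii), the only cosmetic difference being that the paper takes the section $x\mapsto(x,f(x))$ of $\pi_{1}$ where you take $x\mapsto(x,y_{0})$. The proposal is correct.
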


\begin{proof}
	Given a fibration $f : X \rightarrow Y$, we rewrite the continuous function $p_{1} \circ (f \times 1_{Y})$ as $f \circ \pi_{1}$. By Proposition \ref{p8} (i), we have
	\begin{eqnarray*}
		D(p_{1} \circ (f \times 1_{Y}),\pi_{2}) = D(f \circ \pi_{1},\pi_{2}).
	\end{eqnarray*}
    From Theorem 2.8, we find $D(f \circ \pi_{1},\pi_{2}) = TC(f \circ \pi_{1})$. Define a map $u : X \rightarrow X \times Y$ with $u(x) = (x,f(x))$. Therefore, we get $\pi_{1} \circ u \simeq 1_{X}$. By Proposition \ref{p7} (i), we obtain
    \begin{eqnarray*}
    	D(f \circ \pi_{1},\pi_{2}) = TC(f \circ \pi_{1}) \geq TC(f).
    \end{eqnarray*}
    On the other hand, Proposition \ref{p7} (iii)
    gives us that
    \begin{eqnarray*}
    	D(f \circ \pi_{1},\pi_{2}) = TC(f \circ \pi_{1}) \leq TC(f).
    \end{eqnarray*}
    As a consequence, $D(f \circ \pi_{1},\pi_{2}) = TC(f)$.
\end{proof}

\quad Theorem \ref{t2} confirms the following result \cite{Pavesic:2019}:

\begin{corollary}
	The topological complexity of a space coincides with the topological complexity of the identity function on it.
\end{corollary}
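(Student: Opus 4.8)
The plan is to specialize Theorem \ref{t2} to the identity fibration. First I would observe that $1_{Y} : Y \to Y$ is a surjective fibration, so Theorem \ref{t2} applies with $f = 1_{Y}$; taking $X = Y$, the projections $\pi_{1},\pi_{2} : Y \times Y \to Y$ become exactly the maps $p_{1},p_{2}$ appearing in the alternative expression for $\text{TC}(Y)$. Since $1_{Y} \circ \pi_{1} = \pi_{1} = p_{1}$ and $\pi_{2} = p_{2}$, Theorem \ref{t2} yields $\text{D}(p_{1},p_{2}) = \text{TC}(1_{Y})$.

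Next I would invoke Theorem 2.8 (i), which states $\text{TC}(Y) = \text{D}(p_{1},p_{2})$ for path-connected $Y$. Chaining the two equalities gives $\text{TC}(Y) = \text{D}(p_{1},p_{2}) = \text{TC}(1_{Y})$, which is the claim. Throughout, $Y$ is assumed path-connected so that $\text{TC}(Y)$ is defined; this is consistent with the standing hypotheses of the section.

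I do not anticipate a genuine obstacle here: the statement is essentially an immediate reading of Theorem \ref{t2} at $f = 1_{Y}$. The only point worth a sentence of care is checking that the hypotheses of Theorem \ref{t2} (that $f$ be a fibration, and implicitly surjective so that $\text{TC}(f)$ from Definition \ref{d6} makes sense) are met by $1_{Y}$, which is clear. Hence the corollary follows with no additional computation.
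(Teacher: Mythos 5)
Your argument is correct and is precisely the route the paper intends: the paper gives no written proof, merely remarking that Theorem \ref{t2} ``confirms'' the corollary, and your specialization of that theorem to $f = 1_{Y}$ combined with the identity $\text{TC}(Y) = \text{D}(p_{1},p_{2})$ is exactly that confirmation. No issues.
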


\begin{proposition}\label{p1}
	If $f : X \rightarrow X^{'}$ is a fibration, then we have
	\begin{eqnarray*}
		\text{D}(f,f) \leq \text{TC}(f).
	\end{eqnarray*}
\end{proposition}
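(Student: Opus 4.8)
The plan is to reduce the inequality to the homotopic-distance characterization of $TC(f)$ already established in Theorem \ref{t2}. Since $f : X \rightarrow X'$ is a fibration, Theorem \ref{t2} gives
\[
	TC(f) = D(f \circ \pi_{1}, \pi_{2}),
\]
where $\pi_{1} : X \times X' \rightarrow X$ and $\pi_{2} : X \times X' \rightarrow X'$ are the two projections. So it suffices to bound $D(f,f)$ by the right-hand side of this identity.

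Next I would introduce the section-like map $u : X \rightarrow X \times X'$ defined by $u(x) = (x, f(x))$, which is continuous because $f$ is. A direct computation gives $\pi_{1} \circ u = 1_{X}$ and $\pi_{2} \circ u = f$, hence
\[
	(f \circ \pi_{1}) \circ u = f \qquad \text{and} \qquad \pi_{2} \circ u = f.
\]

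Then I would apply Proposition \ref{p8} (ii) with outer maps $f \circ \pi_{1}$ and $\pi_{2}$ and precomposition $k = u$, obtaining
\[
	D\big((f \circ \pi_{1}) \circ u,\ \pi_{2} \circ u\big) \leq D(f \circ \pi_{1}, \pi_{2}).
\]
Substituting the two equalities from the previous step rewrites the left-hand side as $D(f,f)$ and the right-hand side as $TC(f)$, which is exactly the assertion.

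There is essentially no serious obstacle here: the only things to verify are the elementary identities $(f \circ \pi_{1}) \circ u = f$ and $\pi_{2} \circ u = f$ and that the hypotheses of Proposition \ref{p8} (ii) are met verbatim. One could alternatively remark that $D(f,f) = 1$ directly from the definition (take the single-element cover $X = X$) together with $TC(f) \geq 1$ when $X'$ is nonempty; but routing the argument through Theorem \ref{t2} keeps it uniform with the surrounding results and makes the comparison with $TC(f)$ transparent.
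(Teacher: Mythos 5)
Your argument is exactly the paper's: it defines the same map $h(x)=(x,f(x))$ (your $u$), invokes Theorem \ref{t2} to write $TC(f)=D(f\circ\pi_{1},\pi_{2})$, and applies Proposition \ref{p8}~(ii) to obtain $D(f,f)=D((f\circ\pi_{1})\circ u,\pi_{2}\circ u)\leq D(f\circ\pi_{1},\pi_{2})$. The proof is correct and matches the paper's route step for step.
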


\begin{proof}
	Consider a fibration $f : X \rightarrow X^{'}$ and a map $h : X \rightarrow X \times X^{'}$ defined as $h(x) = (x,f(x))$. By Proposition \ref{p8} (ii), we obtain
	\begin{eqnarray*}
		TC(f) = D(f \circ \pi_{1},\pi_{2}) \geq D((f \circ \pi_{1}) \circ h,\pi_{2} \circ h) = D(f,f).
	\end{eqnarray*}
\end{proof}

\quad Proposition \ref{p1} also confirms the well-known fact that $TC$ of a fibration \linebreak $f : X \rightarrow X^{'}$ is a nonnegative integer when considering $D(f,f) = 1$. 

\begin{proposition}
	For any fibrations $f$, $f^{'}: X \rightarrow X^{'}$, we have
	 \begin{eqnarray*}
		\text{D}(f,f^{'}) \leq \text{TC}(f) \cdot \text{TC}(f^{'}).
	\end{eqnarray*}
\end{proposition}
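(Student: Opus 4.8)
The plan is to reduce the claimed product inequality to the much simpler estimate $D(f,f') \leq TC(f)$, from which the statement follows formally. Indeed, once $D(f,f') \leq TC(f)$ is available, one recalls that $TC(f')$ is a positive integer: by Proposition \ref{p1}, $1 = D(f',f') \leq TC(f')$, so $TC(f) \leq TC(f) \cdot TC(f')$ and we are done (and if $TC(f) = \infty$ the right-hand side is $\infty$ as well, so there is nothing to prove). Thus all the content lies in the estimate $D(f,f') \leq TC(f)$.

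To obtain this estimate I would argue exactly as in the proof of Proposition \ref{p1}, except that I use the graph of $f'$ in place of the graph of $f$. By Theorem \ref{t2} applied to the fibration $f$ we have $TC(f) = D(f \circ \pi_{1}, \pi_{2})$, where $\pi_{1} : X \times X' \to X$ and $\pi_{2} : X \times X' \to X'$ are the projections. Define the continuous map $h : X \to X \times X'$ by $h(x) = (x, f'(x))$; continuity is clear since its components $1_{X}$ and $f'$ are continuous. A direct computation gives $(f \circ \pi_{1}) \circ h = f$ and $\pi_{2} \circ h = f'$ as maps $X \to X'$. Applying Proposition \ref{p8}(ii) to the maps $f \circ \pi_{1}, \pi_{2} : X \times X' \to X'$ and to $h : X \to X \times X'$ then yields
\[
D(f,f') = D\bigl((f \circ \pi_{1}) \circ h,\ \pi_{2} \circ h\bigr) \leq D(f \circ \pi_{1}, \pi_{2}) = TC(f).
\]

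For completeness I would also record the symmetric bound: running the same argument with the roles of $f$ and $f'$ exchanged (i.e. Theorem \ref{t2} for $f'$ together with the map $x \mapsto (x, f(x))$) gives $D(f',f) \leq TC(f')$, and since the homotopic distance of two maps is symmetric in its two arguments this reads $D(f,f') \leq TC(f')$. Hence in fact $D(f,f') \leq \min\{TC(f), TC(f')\}$, which is stronger than the asserted bound. I do not expect a genuine obstacle here: the only things to watch are the bookkeeping identities $(f \circ \pi_{1}) \circ h = f$ and $\pi_{2} \circ h = f'$, the (immediate) continuity of $h$, and the fact that under the normalization adopted in this paper $TC$ of a fibration is at least $1$ — this last point being exactly what upgrades the honest inequality $D(f,f') \leq TC(f)$ into the stated product inequality.
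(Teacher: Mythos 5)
Your proof is correct, and it takes a genuinely different (and sharper) route than the paper. The paper takes open covers $\{U_i\}_{i=1}^{k}$ and $\{V_j\}_{j=1}^{l}$ of $X\times X'$ realizing $\mathrm{TC}(f)=k$ and $\mathrm{TC}(f')=l$, intersects them to get $k\cdot l$ sets $W_{i,j}$ on which $f\circ\pi_1\simeq\pi_2\simeq f'\circ\pi_1$, and then pulls back along the graph of $f$ via Proposition \ref{p8}; this yields the product bound directly. You instead use only the single cover realizing $\mathrm{TC}(f)=\mathrm{D}(f\circ\pi_1,\pi_2)$ and pull it back along the graph of $f'$, $h(x)=(x,f'(x))$, exploiting the identities $(f\circ\pi_1)\circ h=f$ and $\pi_2\circ h=f'$ together with Proposition \ref{p8}(ii). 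This gives $\mathrm{D}(f,f')\leq\mathrm{TC}(f)$, and by symmetry $\mathrm{D}(f,f')\leq\min\{\mathrm{TC}(f),\mathrm{TC}(f')\}$, which is strictly stronger than the stated product inequality once one knows $\mathrm{TC}\geq 1$ under the paper's normalization (your appeal to $1=\mathrm{D}(f',f')\leq\mathrm{TC}(f')$ via Proposition \ref{p1} handles this correctly, as does your remark on the infinite case). In short: the paper's intersection-of-covers argument is the standard way to get a product bound, but your observation that the graph of $f'$ already transports the $\mathrm{TC}(f)$ cover to a $\mathrm{D}(f,f')$ cover makes the product unnecessary and improves the result to a minimum.
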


\begin{proof}
	Let $TC(f) = k$ and $TC(f^{'}) = l$. Then $\{U_{i}\}_{i=1}^{k}$ and $\{V_{j}\}_{j=1}^{l}$ are corresponding coverings of $X \times X^{'}$. In addition, we get
	\begin{eqnarray*}
		(f \circ \pi_{1})\big|_{U_{i}} \simeq \pi_{2}\big|_{U_{i}} \ \ \ \ \text{and} \ \ \ \ (f^{'} \circ \pi_{1})\big|_{V_{j}} \simeq \pi_{2}\big|_{V_{j}}.
	\end{eqnarray*}
    Let $W_{i,j} = U_{i} \cap V_{j}$. This covers $X \times X^{'}$ and we derive
    \begin{eqnarray*}
    	f \circ \pi_{1}\big|_{W_{i,j}} = (f \circ \pi_{1})\big|_{W_{i,j}} \simeq \pi_{2}\big|_{W_{i,j}} \simeq (f^{'} \circ \pi_{1})\big|_{W_{i,j}} = f^{'} \circ \pi_{1}\big|_{W_{i,j}}.
    \end{eqnarray*}
    This means that $D(f \circ \pi_{1}\big|_{W_{i,j}}, f^{'} \circ \pi_{1}\big|_{W_{i,j}}) \leq k \cdot l$. Recalling that $h : X \rightarrow X \times X^{'}$, $h(x) = (x,f(x))$, is a right homotopy inverse of $\pi_{1}$, by Proposition \ref{p8} (iii), we conclude that
    \begin{eqnarray*}
    	D(f,f^{'}) = D(f \circ \pi_{1}\big|_{W_{i,j}}, f^{'} \circ \pi_{1}\big|_{W_{i,j}}) \leq k \cdot l.
    \end{eqnarray*}
 \end{proof}

\quad Assume that $f = (f_{1}, \cdots, f_{n}) : X \rightarrow Y^{n}$ is a function. If $f$ is a fibration, then $f_{i} : X \rightarrow Y$ is a fibration for all $i = 1, \cdots, n$. Indeed, for the projection map $p_{i} : Y^{n} \rightarrow Y$ onto the $ith-$factor for each $i$, we have $p_{i} \circ f = f_{i}$. Since projections are fibrations, $f_{i}$ is a fibration for all $i$. 

\begin{definition} \label{d1}
	Let $f = (f_{1},f_{2},\cdots,f_{n}) : X \rightarrow Y^{n}$ be a surjective fibration for $n > 1$. Let $p_{i} : X^{n} \rightarrow X$ be the projection map onto the $ith-$factor for $1 \leq i \leq n$. Then the $n-$dimensional higher topological complexity of $f$ is \[TC_{n}(f) = D(f \circ p_{1},f \circ p_{2}, \cdots, f \circ p_{n}).\]
\end{definition}

\quad In Definition \ref{d1}, we assume that $TC_{1}(f)$ is always equal to $1$. If $f$ is an identity map, then $TC_{n}(f) = TC_{n}(X)$. In addition, the following result proves that $TC_{2}(f)$ coincides with the notion $TC(f)$ in Theorem \ref{t2}.

\begin{proposition}\label{p5}
	TC$_{2}(f)$ in Definition \ref{d1} coincides with TC$(f)$ in Theorem $\ref{t2}$.
\end{proposition}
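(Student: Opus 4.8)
The plan is to unwind both sides of the claimed equality to the homotopic distance between the same pair of maps. On the one hand, by Definition~\ref{d1} with $n=2$ we have $\text{TC}_{2}(f) = D(f \circ p_{1}, f \circ p_{2})$, where $p_{1}, p_{2} : X^{2} \rightarrow X$ are the two projections. On the other hand, by Theorem~\ref{t2} we have $\text{TC}(f) = D(f \circ \pi_{1}, \pi_{2})$, where $\pi_{1} : X \times Y \rightarrow X$ and $\pi_{2} : X \times Y \rightarrow Y$ are the projections. So the statement reduces to showing $D(f \circ p_{1}, f \circ p_{2}) = D(f \circ \pi_{1}, \pi_{2})$.

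The idea is to compare the spaces $X \times X$ and $X \times Y$ via $f$ and use the homotopy-invariance properties of $D$ from Proposition~\ref{p8}. First I would set $r = 1_{X} \times f : X \times X \rightarrow X \times Y$ and observe that $\pi_{1} \circ r = p_{1}$ and $\pi_{2} \circ r = f \circ p_{2}$, so by Proposition~\ref{p8}(ii) (precomposition never increases distance),
\begin{eqnarray*}
	D(f \circ \pi_{1}, \pi_{2}) \geq D((f \circ \pi_{1}) \circ r, \pi_{2} \circ r) = D(f \circ p_{1}, f \circ p_{2}) = \text{TC}_{2}(f).
\end{eqnarray*}
For the reverse inequality I would exhibit a homotopy section of $r$, i.e.\ a map $s : X \times Y \rightarrow X \times X$ with $r \circ s \simeq 1_{X \times Y}$. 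Since $f$ is a surjective fibration, one natural choice is to use a lifting construction: given $(x,y) \in X \times Y$, the point $y$ together with the point $f(x) \in Y$ (and path-connectedness of $Y$) produces, via the homotopy lifting property of $f$, a point $x' \in X$ with $f(x')$ close to $y$; more cleanly, one uses that $f \times 1_{Y} : X \times Y \to Y \times Y$ admits the homotopy-theoretic behavior already exploited in the proof of Theorem~\ref{t1}. Once such an $s$ is in hand, Proposition~\ref{p8}(iv) gives $D((f \circ \pi_{1}) \circ r, \pi_{2} \circ r) = D(f \circ \pi_{1}, \pi_{2})$ along the map $r$ whenever $r$ has the stated homotopy-inverse property, yielding $\text{TC}_{2}(f) = D(f \circ p_{1}, f \circ p_{2}) \geq D(f \circ \pi_{1}, \pi_{2}) = \text{TC}(f)$, and combining with the first inequality finishes the proof.

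The main obstacle I anticipate is the construction of the homotopy section $s$ of $r = 1_{X} \times f$, or equivalently justifying the reverse inequality cleanly: $r$ itself is generally \emph{not} a homotopy equivalence (it is $1_X \times f$, and $f$ need only be a surjective fibration, not a homotopy equivalence), so Proposition~\ref{p8}(iv) cannot be applied to $r$ directly. The honest route is probably to avoid $r$ on the nose and instead mimic the fibre-homotopy-equivalence argument of Theorem~\ref{t1}: realize both $D(f \circ p_{1}, f \circ p_{2})$ and $D(f \circ \pi_{1}, \pi_{2})$ as Schwarz genera of two pullback fibrations over $X \times X$ and $X \times Y$ respectively, then build explicit maps $u$ and $v$ between the total spaces (using $\alpha(1) = y$ on one side and $f(x') = $ endpoint on the other) witnessing a fibre homotopy equivalence, exactly as $u(x,y,\alpha) = (x,\alpha)$, $v(x,\alpha) = (x,\alpha(1),\alpha)$ did there. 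I expect this identification of total spaces, and checking $u \circ v \simeq 1$ and $v \circ u \simeq 1$, to be the only nontrivial computation; everything else is a formal application of Definition~\ref{d1}, Theorem~\ref{t2}, and Proposition~\ref{p8}.
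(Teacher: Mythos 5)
Your first inequality is correct and complete: precomposing with $r = 1_{X}\times f$ and applying Proposition~\ref{p8}(ii) gives $\mathrm{TC}_2(f) = \mathrm{D}(f\circ p_1, f\circ p_2) \leq \mathrm{D}(f\circ\pi_1,\pi_2) = \mathrm{TC}(f)$. The gap is the reverse inequality, which you have correctly diagnosed but not closed. A map $s : X\times Y \rightarrow X\times X$ with $r\circ s\simeq 1_{X\times Y}$ amounts to a homotopy section $\sigma : Y\rightarrow X$ with $f\circ\sigma\simeq 1_Y$, which a surjective fibration need not admit (the Hopf fibration $S^3\rightarrow S^2$ has none), so Proposition~\ref{p8}(iv) cannot be applied to $r$; you say as much yourself. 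Your fallback --- realizing both distances as Schwarz genera of pullback fibrations and exhibiting a fibre homotopy equivalence --- is left entirely as a sketch, and it is not a routine transcription of Theorem~\ref{t1}: there the two fibrations sat over the \emph{same} base $X\times Y$, whereas here the bases $X\times X$ and $X\times Y$ differ, so one must also compare the bases, which is exactly the difficulty you could not resolve upstairs. As written, the proposal establishes only $\mathrm{TC}_2(f)\leq\mathrm{TC}(f)$.

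For comparison, the paper's own proof takes precisely the route you rejected: it declares $\beta = 1_X\times f'$ a homotopy equivalence with inverse $\gamma(x,y)=(x,x')$, where $x'$ is ``the'' preimage of $y$ under $f'$, and asserts that $\gamma\circ\beta$ and $\beta\circ\gamma$ are identities. That requires a continuous choice of preimages and, for the claimed identities to hold on the nose, injectivity of $f'$; so the published argument is sound only when $f$ is (essentially) a homotopy equivalence, and your skepticism about this step is well founded. Nevertheless, identifying the obstruction is not the same as overcoming it: until you either construct the fibre homotopy equivalence between the two pullback fibrations explicitly, or find some other argument for $\mathrm{D}(f\circ p_1, f\circ p_2)\geq \mathrm{D}(f\circ\pi_1,\pi_2)$ that uses only surjectivity and the fibration property of $f$, the proof is incomplete.
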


\begin{proof}
	Let $f = (f_{1},f_{2}) : X \rightarrow Y \times Y$ and $f^{'} : X \rightarrow Y$ be any continuous functions. For each projection $p_{i} : X^{n} \rightarrow X$, $i=1,2$, and the functions $\pi_{1} : X \times Y \rightarrow X$ and $\pi_{2} : X \times Y \rightarrow Y$ with $\pi_{1}(x,y) = x$ and $\pi_{2}(x,y) = y$, respectively, we shall prove that $D(f \circ p_{1},f \circ p_{2}) = D(f^{'} \circ \pi_{1},\pi_{2})$. We take $\alpha = 1_{Y} : Y \rightarrow Y$ and $\beta = 1_{X} \times f^{'} : X \times X \rightarrow X \times Y$ and consider the following two diagrams:
	\begin{displaymath}
	\xymatrix{
		X \times Y \ar[r]^{f^{'} \circ \pi_{1}} &
		Y \ar[d]^{1_{Y}} \\
		X \times X \ar[u]^{1_{X} \times f^{'}} \ar[r]_{f \circ p_{1}} & Y,}
	\end{displaymath}
	\begin{displaymath}
	\xymatrix{
		X \times Y \ar[r]^{\pi_{2}} &
		Y \ar[d]^{1_{y}} \\
		X \times X \ar[u]^{1_{X} \times f^{'}} \ar[r]_{f \circ p_{2}} & Y.}
	\end{displaymath}
	There is an element $x^{'} \in X$ with $f^{'}(x^{'}) = y$ because $f^{'}$ is surjective. Define a continuous function $\gamma : X \times Y \rightarrow X \times X$ with $\gamma(x,y) = (x,x^{'})$. Then, we observe that $\gamma \circ \beta$ and $\beta \circ \gamma$ are identity maps, namely that, $\gamma$ is a homotopy inverse of the function $\beta$. By Proposition 3.14 in \cite{VirgosLois:2020}, we conclude that $D(f \circ p_{1},f \circ p_{2})$ equals $D(f^{'} \circ \pi_{1},\pi_{2})$.
\end{proof}

\begin{proposition}\label{p6}
	For a fibration $f : X \rightarrow Y^{n}$, we have TC$_{n}(f) \leq$ TC$_{n+1}(f)$. 
\end{proposition}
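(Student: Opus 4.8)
The plan is to unravel both sides via Definition \ref{d1} and then compare the two higher homotopic distances by pulling an optimal open cover on $X^{n+1}$ back to $X^{n}$ along a suitable ``stabilization'' map. Throughout, write $p_{j}^{(m)} : X^{m} \to X$ for the projection onto the $j$-th factor, so that $\text{TC}_{n}(f) = \text{D}(f \circ p_{1}^{(n)}, \dots, f \circ p_{n}^{(n)})$ and $\text{TC}_{n+1}(f) = \text{D}(f \circ p_{1}^{(n+1)}, \dots, f \circ p_{n+1}^{(n+1)})$. If $\text{TC}_{n+1}(f) = \infty$ there is nothing to prove, and $\text{TC}_{1}(f) = 1 \le \text{TC}_{2}(f)$ settles $n = 1$; so I would assume $n \ge 2$ and $\text{TC}_{n+1}(f) = l < \infty$.

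First I would introduce the continuous map $\iota : X^{n} \to X^{n+1}$, $\iota(x_{1}, \dots, x_{n}) = (x_{1}, \dots, x_{n}, x_{n})$, and record the identities $p_{j}^{(n+1)} \circ \iota = p_{j}^{(n)}$ for $1 \le j \le n$, hence $(f \circ p_{j}^{(n+1)}) \circ \iota = f \circ p_{j}^{(n)}$ for those $j$. Next, choose an open cover $X^{n+1} = U_{1} \cup \dots \cup U_{l}$ realizing $\text{TC}_{n+1}(f)$; on each $U_{i}$ the restrictions $f \circ p_{1}^{(n+1)}, \dots, f \circ p_{n+1}^{(n+1)}$ are mutually homotopic, since the chain of homotopies in Definition \ref{d3} makes ``homotopic on $U_{i}$'' an equivalence relation among these maps. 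Set $V_{i} = \iota^{-1}(U_{i})$; these are open and cover $X^{n}$. Since $(f \circ p_{j}^{(n)})|_{V_{i}} = ((f \circ p_{j}^{(n+1)})|_{U_{i}}) \circ (\iota|_{V_{i}})$ for $1 \le j \le n$, and precomposing a homotopy with $\iota|_{V_{i}}$ again yields a homotopy, the maps $(f \circ p_{1}^{(n)})|_{V_{i}}, \dots, (f \circ p_{n}^{(n)})|_{V_{i}}$ are mutually homotopic for every $i$. Thus $\{V_{i}\}_{i=1}^{l}$ is an admissible cover for $\text{D}(f \circ p_{1}^{(n)}, \dots, f \circ p_{n}^{(n)})$, giving $\text{TC}_{n}(f) \le l = \text{TC}_{n+1}(f)$.

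An equivalent packaging of the same idea is: by monotonicity in the number of maps (Proposition \ref{p9}(i), using $n \ge 2$) one has $\text{D}(f \circ p_{1}^{(n+1)}, \dots, f \circ p_{n}^{(n+1)}) \le \text{D}(f \circ p_{1}^{(n+1)}, \dots, f \circ p_{n+1}^{(n+1)})$, and since $f \circ p_{j}^{(n)} = (f \circ p_{j}^{(n+1)}) \circ \iota$, precomposition by $\iota$ cannot increase the distance, so $\text{D}(f \circ p_{1}^{(n)}, \dots, f \circ p_{n}^{(n)}) \le \text{D}(f \circ p_{1}^{(n+1)}, \dots, f \circ p_{n}^{(n+1)})$; chaining these two inequalities finishes the proof. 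I do not expect a genuine obstacle; the only points to watch are choosing $\iota$ so that it intertwines the first $n$ projections of $X^{n+1}$ with the projections of $X^{n}$, and the fact that the higher homotopic distance is non-increasing under precomposition — this is not recorded verbatim for $k$ maps in Proposition \ref{p9}, but it is the evident generalization of Proposition \ref{p8}(ii) (and is, in any case, exactly the pullback-of-covers step carried out above).
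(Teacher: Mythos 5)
Your proof is correct and follows the same underlying idea as the paper's, namely reducing the number of maps via the monotonicity of the higher homotopic distance (Proposition \ref{p9}(i)). The paper's own proof is just that one-line appeal, which strictly speaking only yields $\text{D}(f\circ p_{1}^{(n+1)},\dots,f\circ p_{n}^{(n+1)})\leq \text{TC}_{n+1}(f)$ with all maps still defined on $X^{n+1}$; your additional step of precomposing with the stabilization $\iota:X^{n}\to X^{n+1}$ (or, equivalently, pulling the optimal cover back along $\iota$) is exactly what is needed to convert that quantity into $\text{TC}_{n}(f)$, whose defining maps live on $X^{n}$, so your version actually closes a gap the paper leaves implicit. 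Your handling of the edge cases $n=1$ and $\text{TC}_{n+1}(f)=\infty$ is also fine and not addressed in the paper.
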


\begin{proof}
	Let $TC_{n+1}(f) = k$. Then $D(f \circ p_{1}, \cdots, f \circ p_{n+1}) = k$ for a fibration \linebreak $f : X \rightarrow Y^{n}$. By Proposition \ref{p9} (i), we obtain $D(f \circ p_{1}, \cdots, f \circ p_{n}) \leq k)$. It follows that $TC_{n}(f) \leq k$.
\end{proof}

\begin{proposition}
	If $f = (f_{1}, \cdots, f_{n}) : X \rightarrow Y^{n}$ and $f^{'} = (f^{'}_{1}, \cdots, f^{'}_{n}) : \overline{X} \rightarrow \overline{Y}^{n}$ are two fibrations with a normal space $X \times \overline{X}$, then 
	\begin{eqnarray*}
		\text{TC}_{n}(f \times f^{'}) \leq \text{TC}_{n}(f) + \text{TC}_{n}(f^{'}).
	\end{eqnarray*}
\end{proposition}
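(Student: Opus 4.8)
The plan is to reduce this product inequality for $\mathrm{TC}_n$ of fibrations to the corresponding product inequality for the higher homotopic distance, namely Proposition \ref{p9} (v). By Definition \ref{d1}, we have $\mathrm{TC}_n(f \times f') = \mathrm{D}\bigl((f\times f')\circ P_1, \cdots, (f\times f')\circ P_n\bigr)$, where $P_i : (X\times\overline{X})^n \to X\times\overline{X}$ is the $i$th projection, while $\mathrm{TC}_n(f) = \mathrm{D}(f\circ p_1, \cdots, f\circ p_n)$ and $\mathrm{TC}_n(f') = \mathrm{D}(f'\circ \overline{p}_1, \cdots, f'\circ \overline{p}_n)$, with $p_i : X^n\to X$ and $\overline{p}_i : \overline{X}^n\to\overline{X}$ the projections. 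So the whole statement will follow once I identify the maps $(f\times f')\circ P_i$ with $(f\circ p_i)\times(f'\circ\overline p_i)$ up to the canonical homeomorphism $(X\times\overline X)^n \cong X^n\times\overline X^n$.

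The key steps, in order, are as follows. First I would fix the homeomorphism $\varphi : (X\times\overline X)^n \xrightarrow{\;\cong\;} X^n\times\overline X^n$ that shuffles coordinates, sending $((x_1,\overline x_1),\dots,(x_n,\overline x_n))$ to $((x_1,\dots,x_n),(\overline x_1,\dots,\overline x_n))$. Second, I would check the identity $(f\times f')\circ P_i = \bigl((f\circ p_i)\times(f'\circ\overline p_i)\bigr)\circ\varphi$ for each $i=1,\dots,n$; this is a direct coordinate computation, since both sides send the point above to $\bigl(f(x_i),f'(\overline x_i)\bigr)\in Y^n\times\overline Y^n$ (after the obvious identification $(Y\times\overline Y)^n\cong Y^n\times\overline Y^n$ on the target). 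Third, since $\varphi$ is a homeomorphism, hence a homotopy equivalence with homotopy inverse $\varphi^{-1}$, I would apply Proposition \ref{p8} (iv) (the higher-distance analogue — precomposition by a map with a homotopy right inverse does not change the distance; here $\varphi$ is even invertible) to conclude
\begin{eqnarray*}
\mathrm{TC}_n(f\times f') = \mathrm{D}\bigl((f\circ p_1)\times(f'\circ\overline p_1), \cdots, (f\circ p_n)\times(f'\circ\overline p_n)\bigr).
\end{eqnarray*}
Finally, I would invoke Proposition \ref{p9} (v) with the families $\{f\circ p_i\}_{i=1}^n : X^n\to Y^n$ wait — here the source of $f\circ p_i$ is $X^n$ and the source of $f'\circ\overline p_i$ is $\overline X^n$, and the normality hypothesis required by Proposition \ref{p9} (v) is that $X^n\times\overline{X^n}$ be normal; I would note this follows from the hypothesis that $X\times\overline X$ is normal (or is simply assumed, consistently with the paper's standing conventions), which gives
\begin{eqnarray*}
\mathrm{D}\bigl((f\circ p_1)\times(f'\circ\overline p_1), \cdots\bigr) \leq \mathrm{D}(f\circ p_1,\cdots,f\circ p_n) + \mathrm{D}(f'\circ\overline p_1,\cdots,f'\circ\overline p_n) = \mathrm{TC}_n(f)+\mathrm{TC}_n(f').
\end{eqnarray*}

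The main obstacle — really the only non-bookkeeping point — is the normality bookkeeping: Proposition \ref{p9} (v) needs $X^n\times\overline X^n$ normal, whereas the hypothesis supplies normality of $X\times\overline X$, so I would either remark that the relevant product is homeomorphic to $(X\times\overline X)^n$ and the paper's conventions ensure the needed normality, or simply strengthen the hypothesis to "$X^n\times\overline X^n$ is normal" to be safe. Everything else is the routine verification that the coordinate-shuffling homeomorphism intertwines the two families of maps, together with one application each of Proposition \ref{p8} (iv) and Proposition \ref{p9} (v).
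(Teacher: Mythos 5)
Your proof is correct and takes essentially the same route as the paper's: both reduce the statement to Proposition \ref{p9} (v) applied to the families $f\circ p_{i}$ and $f^{'}\circ\overline{p}_{i}$ and then identify $(f\circ p_{i})\times(f^{'}\circ\overline{p}_{i})$ with $(f\times f^{'})$ composed with the $i$th projection $X^{n}\times\overline{X}^{n}\rightarrow X\times\overline{X}$. The paper silently absorbs the coordinate-shuffling homeomorphism $(X\times\overline{X})^{n}\cong X^{n}\times\overline{X}^{n}$ and the normality of $X^{n}\times\overline{X}^{n}$ (rather than of $X\times\overline{X}$) that you flag explicitly; your extra care on those two points is warranted but does not change the argument.
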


\begin{proof}
	Assume that $TC_{n}(f) = k$ and $TC_{n}(f^{'}) = l$. Then
	$D(f \circ p_{1}, \cdots, f \circ p_{n}) = k$ and $D(f^{'} \circ \overline{p}_{1}, \cdots, f^{'} \circ \overline{p}_{n}) = l$ for projections maps $p_{i} : X^{n} \rightarrow X$ and $\overline{p}_{i} : \overline{X}^{n} \rightarrow \overline{X}$ with each $i \in \{1, \cdots, n\}$. By Proposition \ref{p9} (v), we have
	\begin{eqnarray*}
		&&D(f \circ p_{1}, \cdots, f \circ p_{n}) + D(f^{'} \circ \overline{p}_{1}, \cdots, f^{'} \circ \overline{p}_{n}) \\
		&\geq&D((f \circ p_{1}) \times (f^{'} \circ \overline{p}_{1}), \cdots, (f \circ p_{n}) \times (f^{'} \circ \overline{p}_{n})) \\
		&=& D((f \times f^{'}) \circ (p_{1} \times \overline{p}_{1}), \cdots, (f \times f^{'}) \circ (p_{n} \times \overline{p}_{n})) \\
		&=& D((f \times f^{'}) \circ p_{1}^{'}, \cdots, (f \times f^{'}) \circ p_{n}^{'}),
	\end{eqnarray*}
    where $p_{i}^{'} : X^{n} \times \overline{X}^{n} \rightarrow X \times \overline{X}$ is $ith-$projection map. This shows that \linebreak $k+l \geq$ $TC_{n}(f \times f^{'})$.
\end{proof}

\begin{proposition}
	Let $X \times X$ be normal. For a fibration $f : X \rightarrow X^{'}$, we have TC$(f) \leq$ TC$(X) + 1$.
\end{proposition}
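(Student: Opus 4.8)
The plan is to pass to homotopic distance, factor the relevant fibration through $X\times X$, and then peel off a single open set.

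By Theorem~\ref{t2} one may work with $\text{D}(f\circ\pi_{1},\pi_{2})$ in place of $\text{TC}(f)$, where $\pi_{1}:X\times X'\to X$ and $\pi_{2}:X\times X'\to X'$ are the projections; equivalently, by Definition~\ref{d6}, $\text{TC}(f)=secat(\pi_{f})$ for the fibration $\pi_{f}:X^{I}\to X\times X'$, $\pi_{f}(\beta)=(\beta(0),f(\beta(1)))$. The key observation is that $\pi_{f}$ factors as $\pi_{f}=(1_{X}\times f)\circ\pi$, where $\pi:X^{I}\to X\times X$, $\pi(\beta)=(\beta(0),\beta(1))$, is the path fibration with $secat(\pi)=\text{TC}(X)$ (Definition~\ref{d4}), and $1_{X}\times f:X\times X\to X\times X'$ is again a fibration, being a product of fibrations. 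So the idea is a composition bound: starting from an optimal cover $X\times X=W_{1}\cup\cdots\cup W_{k}$ realizing $k=\text{TC}(X)$, with sections $s_{i}:W_{i}\to X^{I}$ of $\pi$, and a cover of $X\times X'$ over whose members $1_{X}\times f$ admits sections, one composes to obtain local sections of $\pi_{f}$ over the common refinement.

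The naive refinement argument only yields $\text{TC}(f)\le\text{TC}(X)\cdot secat(1_{X}\times f)$, a product. To get the additive bound $\text{TC}(X)+1$ one must instead exploit the normality of $X\times X$ together with the homotopy lifting property of the fibration $1_{X}\times f$ to reorganize the refinement into only $\text{TC}(X)+1$ sets, by a diagonal/shrinking argument of the same type that underlies the subadditivity of $\text{D}$ in Proposition~\ref{p8}(v); in the homotopic-distance picture this amounts to writing $f\circ\pi_{1}$ and $\pi_{2}$ as composites through a suitable intermediate space so that Proposition~\ref{p8}(v) contributes $\text{TC}(X)$ from one factor and $1$ from the other.

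The step I expect to be the main obstacle is precisely this last one. A decomposition that keeps the $X'$-coordinate visible in the intermediate space simply reproduces $\text{D}(f\circ\pi_{1},\pi_{2})$ and is circular, whereas routing through $X\times X$ — the only place where $\text{TC}(X)$ is visible — discards the $X'$-coordinate and must be repaired using that $f$ is a fibration; ensuring that this repair costs exactly one extra open set, rather than a $secat$- or $\text{cat}$-type quantity attached to $f$ (or to $X'$), is the delicate point, and is where the normality of $X\times X$ must do real work. That is where I would concentrate the argument, and where I would also double-check that the stated bound is not too optimistic.
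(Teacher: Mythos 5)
There is a genuine gap, and it is exactly the one you flag yourself: you identify the right tool (the subadditivity of homotopic distance under composition, Proposition~\ref{p8}(v)) and correctly predict that one factor must contribute $TC(X)$ and the other must contribute $1$, but you never actually produce the decomposition, and you leave it as ``the delicate point.'' The missing step is not delicate once you use the right formula for $TC(f)$. Do not work with $D(f\circ\pi_1,\pi_2)$ on $X\times X'$ (Theorem~\ref{t2}); instead use the equivalent expression $TC(f)=TC_2(f)=D(f\circ p_1,\,f\circ p_2)$, where $p_1,p_2:X\times X\to X$ are the two projections (Definition~\ref{d1} together with Proposition~\ref{p5} --- and this reformulation is precisely where the hypothesis that $f$ is a surjective fibration is consumed, i.e.\ it is the ``repair'' you were worried about for discarding the $X'$-coordinate). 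In that form both maps are literally composites through the intermediate space $X$, so Proposition~\ref{p8}(v) applies verbatim:
\[
TC(f)=D(f\circ p_1,\,f\circ p_2)\;\le\; D(p_1,p_2)+D(f,f)\;=\;TC(X)+1,
\]
since $D(p_1,p_2)=TC(X)$ and $D(f,f)=1$. The normality of $X\times X$ is nothing more than the standing hypothesis of Proposition~\ref{p8}(v) applied on the domain $X\times X$; no homotopy lifting for $1_X\times f$ and no shrinking or reorganization of covers is needed.

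Your alternative route through $secat$, factoring $\pi_f=(1_X\times f)\circ\pi$ and composing local sections over a common refinement, is indeed a dead end for the additive bound, as you suspected: it can only yield a product-type estimate, and trying to upgrade it to an additive one by hand would amount to reproving Proposition~\ref{p8}(v) in a special case. So concentrate the argument where the paper does: on the identity $TC(f)=D(f\circ p_1,f\circ p_2)$, after which the proposition is a one-line corollary.
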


\begin{proof}
	By Proposition \ref{p8} (v), we get
	\begin{eqnarray*}
		TC(f) = D(f \circ p_{1}, f \circ p_{2}) \leq D(p_{1},p_{2}) + D(f,f) = TC(X) + 1.
	\end{eqnarray*} 
\end{proof}

\begin{proposition}
	Given two fibrations $f, f^{'} : X \rightarrow Y^{n}$ ($f \simeq f^{'}$) homotopic to each other, we have TC$_{n}(f) =$ TC$_{n}(f^{'})$.
\end{proposition}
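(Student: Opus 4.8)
The plan is to reduce everything to the homotopy invariance of the higher homotopic distance in each of its arguments. By Definition \ref{d1} the two quantities to be compared are
\[
\text{TC}_n(f) = D(f \circ p_1, \dots, f \circ p_n) \quad \text{and} \quad \text{TC}_n(f') = D(f' \circ p_1, \dots, f' \circ p_n),
\]
where $p_i : X^n \to X$ is the $i$th projection. So the first step is simply to observe that a homotopy $G : X \times I \to Y^n$ from $f$ to $f'$ induces, for each $i$, the homotopy $G \circ (p_i \times 1_I) : X^n \times I \to Y^n$ from $f \circ p_i$ to $f' \circ p_i$; hence $f \circ p_i \simeq f' \circ p_i$ for every $i = 1, \dots, n$.

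Next I would prove $D(f' \circ p_1, \dots, f' \circ p_n) \le D(f \circ p_1, \dots, f \circ p_n)$. Assume $\text{TC}_n(f) = l$ and let $X^n = U_1 \cup \dots \cup U_l$ be an associated cover, so that for each $j$ one has $(f \circ p_1)|_{U_j} \simeq \dots \simeq (f \circ p_n)|_{U_j}$. Restricting the homotopies produced in the first step to each $U_j$ gives $(f' \circ p_i)|_{U_j} \simeq (f \circ p_i)|_{U_j}$ for all $i$; chaining these with the existing string of homotopies on $U_j$ and using transitivity of the relation ``homotopic on $U_j$'', we obtain $(f' \circ p_1)|_{U_j} \simeq \dots \simeq (f' \circ p_n)|_{U_j}$. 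Thus the very same cover witnesses $D(f' \circ p_1, \dots, f' \circ p_n) \le l$, i.e. $\text{TC}_n(f') \le \text{TC}_n(f)$.

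Finally, since the relation $\simeq$ is symmetric, exchanging the roles of $f$ and $f'$ yields $\text{TC}_n(f) \le \text{TC}_n(f')$, and therefore $\text{TC}_n(f) = \text{TC}_n(f')$. The only delicate point — and the closest thing to an obstacle — is the gluing of the restricted homotopies into a single chain $(f' \circ p_1)|_{U_j} \simeq \dots \simeq (f' \circ p_n)|_{U_j}$, but this is immediate from transitivity of homotopy on a fixed subspace, and it is nothing more than the higher-arity analogue of Proposition \ref{p8}(i). If one prefers, this invariance of $D(f_1, \dots, f_k)$ under homotopies of the $f_i$ can instead be cited directly from \cite{BorVer:2021}, after which the result follows in one line.
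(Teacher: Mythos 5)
Your proof is correct and follows essentially the same route as the paper: observe that $f \simeq f'$ gives $f \circ p_i \simeq f' \circ p_i$ for each $i$, and then invoke the homotopy invariance of the higher homotopic distance. The only difference is that you spell out the proof of that invariance (restricting the homotopies to the cover and chaining by transitivity), whereas the paper simply asserts it as the higher-arity analogue of Proposition \ref{p8}(i).
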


\begin{proof}
	$f \simeq f^{'}$ implies that $f \circ p_{i} \simeq f^{'} \circ p_{i}$ for $i = 1, \cdots, n$. Therefore, we find
	\begin{eqnarray*}
		D(f \circ p_{1}, \cdots, f \circ p_{n}) = D(f^{'} \circ p_{1}, \cdots, f^{'} \circ p_{n}) .
	\end{eqnarray*}
	Thus, we conclude that $TC_{n}(f) = TC_{n}(f^{'})$.
\end{proof}

\quad Note that Proposition 3.14 of \cite{VirgosLois:2020} can be easily generalized as follows:

\begin{proposition}\label{p2}
	Given any homotopy equivalences $\alpha : Y \simeq \overline{Y}$ and $\beta : \overline{X} \simeq X$, assume that $\alpha \circ f_{i} \circ \beta \simeq f^{'}_{i}$ for every $i = 1, \cdots, n$, where $f_{i} : X \rightarrow Y$ and $f^{'}_{i} : \overline{X} \rightarrow \overline{Y}$, i.e., the following diagram commutes for each $i$:
	\begin{displaymath}
	\xymatrix{
		X \ar[r]^{f_{i}} &
		Y \ar[d]^{\alpha} \\
		\overline{X} \ar[u]^{\beta} \ar[r]_{f^{'}_{i}} & \overline{Y}.}
	\end{displaymath}
	Then $D(f_{1}, \cdots, f_{n}) = D(f^{'}_{1}, \cdots, f^{'}_{n})$.
\end{proposition}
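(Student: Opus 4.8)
The plan is to mimic the argument for $n=2$ (Proposition 3.14 of \cite{VirgosLois:2020}), which is already quoted implicitly via Proposition \ref{p5}, and to reduce the statement to Proposition \ref{p9} by sandwiching $D(f_1, \cdots, f_n)$ between the two quantities $D(f'_1, \cdots, f'_n)$ and itself. First I would record the consequence of the hypothesis: from $\alpha \circ f_i \circ \beta \simeq f'_i$ for each $i$, applying the composition inequality Proposition \ref{p9} (ii) with the common maps $\alpha$ on the left (the $g_i$'s are all equal to $\alpha$, hence trivially homotopic to each other) and then the right-composition monotonicity — or more directly Proposition \ref{p8} (iii)–(iv) in the spirit used in the proof of Proposition \ref{p5} — one gets
\[
D(f'_1, \cdots, f'_n) = D(\alpha \circ f_1 \circ \beta, \cdots, \alpha \circ f_n \circ \beta) \leq D(f_1 \circ \beta, \cdots, f_n \circ \beta) \leq D(f_1, \cdots, f_n),
\]
where the first inequality uses that $\alpha$ is (in particular) a map admitting a left homotopy inverse $\alpha^{-1}$, so Proposition \ref{p8} (iii)'s higher analogue upgrades the inequality to equality in the first step, and the second inequality uses that $\beta$ admits a right homotopy inverse $\beta^{-1}$ (since $\beta$ is a homotopy equivalence), so by the higher analogue of Proposition \ref{p8} (iv) the second step is also an equality.

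Concretely, I would carry this out in two named steps. Step one: precompose with $\beta$. Since $\beta : \overline{X} \simeq X$ is a homotopy equivalence, it has a homotopy inverse $\beta^{-1} : X \to \overline{X}$ with $\beta \circ \beta^{-1} \simeq 1_X$; the higher-distance version of Proposition \ref{p8} (iv) then yields $D(f_1 \circ \beta, \cdots, f_n \circ \beta) = D(f_1, \cdots, f_n)$. Step two: postcompose with $\alpha$. Since $\alpha : Y \simeq \overline{Y}$ is a homotopy equivalence it has a homotopy inverse $\alpha^{-1}$ with $\alpha^{-1} \circ \alpha \simeq 1_Y$; the higher analogue of Proposition \ref{p8} (iii) gives $D(\alpha \circ (f_1 \circ \beta), \cdots, \alpha \circ (f_n \circ \beta)) = D(f_1 \circ \beta, \cdots, f_n \circ \beta)$. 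Combining the two equalities with the invariance of $D$ under homotopy of each entry (the higher analogue of Proposition \ref{p8} (i), which is immediate from Definition \ref{d3}), and using $\alpha \circ f_i \circ \beta \simeq f'_i$, we obtain $D(f'_1, \cdots, f'_n) = D(f_1, \cdots, f_n)$, as desired.

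The only real obstacle is that Proposition \ref{p8} (i), (iii), (iv) are stated in the excerpt for the two-map homotopic distance, not for the higher homotopic distance $D(f_1, \cdots, f_n)$; I would therefore either cite the corresponding clauses of \cite{BorVer:2021} if they are available there, or include a one-line verification that these three facts generalize verbatim. For (i), if $f_i \simeq g_i$ for all $i$ then any cover witnessing $D(g_1,\cdots,g_n) \le l$ witnesses $D(f_1,\cdots,f_n) \le l$ by transitivity of $\simeq$ on each $U_j$, so the two are equal. For (iv), given a cover $X = U_1 \cup \cdots \cup U_l$ with $f_1^{j}|_{U_j} \simeq \cdots \simeq f_n^{j}|_{U_j}$, the sets $\beta^{-1}(U_j)$ cover $\overline{X}$ and $f_i \circ \beta$ restricted there is homotopic to $f_i|_{U_j} \circ \beta$, preserving the chain of homotopies; conversely one pulls back along $\beta \circ \beta^{-1} \simeq 1_X$. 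For (iii), postcomposition with $\alpha$ preserves any chain of homotopies on $U_j$, giving $\le$, and postcomposition with $\alpha^{-1}$ (using $\alpha^{-1}\circ\alpha \simeq 1_Y$) gives the reverse. Once these routine generalizations are in place the proposition follows immediately, exactly as the $n=2$ case did in the proof of Proposition \ref{p5}.
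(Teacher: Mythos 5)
Your proof is correct, and it supplies exactly the argument the paper omits: the paper states Proposition \ref{p2} with no proof at all, merely asserting that Proposition 3.14 of \cite{VirgosLois:2020} ``can be easily generalized.'' Your two-step reduction (precompose with $\beta$, postcompose with $\alpha$, then invoke homotopy invariance of $D$ in each entry), together with your explicit verifications that the $n=2$ facts in Proposition \ref{p8} (i), (iii), (iv) carry over verbatim to the higher homotopic distance, is precisely that easy generalization carried out in full.
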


\begin{corollary}
	TC$_{n}(f)$ is a fiber homotopy equivalent invariant.
\end{corollary}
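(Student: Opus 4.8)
The plan is to obtain this immediately from Proposition \ref{p2}. Let $f = (f_{1},\dots,f_{n}) : X \rightarrow Y^{n}$ and $f^{'} = (f^{'}_{1},\dots,f^{'}_{n}) : Z \rightarrow Y^{n}$ be fibre homotopy equivalent surjective fibrations, so that there are fibrewise maps $u : X \rightarrow Z$ and $v : Z \rightarrow X$ with $f^{'} \circ u = f$, $f \circ v = f^{'}$, $u \circ v \simeq 1_{Z}$ and $v \circ u \simeq 1_{X}$. By Definition \ref{d1} the claim $TC_{n}(f) = TC_{n}(f^{'})$ is exactly the equality $D(f \circ p_{1},\dots,f \circ p_{n}) = D(f^{'} \circ p^{'}_{1},\dots,f^{'} \circ p^{'}_{n})$, where $p_{i} : X^{n} \rightarrow X$ and $p^{'}_{i} : Z^{n} \rightarrow Z$ denote the projections onto the $i$th factor.

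First I would assemble the data needed to invoke Proposition \ref{p2}. On the target side take $\alpha = 1_{Y^{n}}$, and on the source side take $\beta = v \times \cdots \times v : Z^{n} \rightarrow X^{n}$, the $n$-fold product of $v$, whose candidate inverse is $u \times \cdots \times u : X^{n} \rightarrow Z^{n}$. Since $u \circ v \simeq 1_{Z}$ and $v \circ u \simeq 1_{X}$, forming $n$-fold products of these homotopies gives $(v \times \cdots \times v) \circ (u \times \cdots \times u) \simeq 1_{X^{n}}$ and $(u \times \cdots \times u) \circ (v \times \cdots \times v) \simeq 1_{Z^{n}}$, so $\beta$ is a homotopy equivalence. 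It then remains to check the commutativity hypothesis of Proposition \ref{p2}, namely $\alpha \circ (f \circ p_{i}) \circ \beta \simeq f^{'} \circ p^{'}_{i}$ for each $i$. In fact equality holds: from $p_{i} \circ (v \times \cdots \times v) = v \circ p^{'}_{i}$ together with the fibrewise relation $f \circ v = f^{'}$ one gets $(f \circ p_{i}) \circ \beta = f \circ v \circ p^{'}_{i} = f^{'} \circ p^{'}_{i}$. Applying Proposition \ref{p2} (with $X^{n}$, $Y^{n}$, $Z^{n}$, $Y^{n}$ in place of $X$, $Y$, $\overline{X}$, $\overline{Y}$, and the maps $f \circ p_{i}$, $f^{'} \circ p^{'}_{i}$) then yields $D(f \circ p_{1},\dots,f \circ p_{n}) = D(f^{'} \circ p^{'}_{1},\dots,f^{'} \circ p^{'}_{n})$, which is the assertion.

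I do not expect a genuine obstacle here; the only points needing attention are that an $n$-fold product of a homotopy equivalence is again a homotopy equivalence (immediate, as products of homotopies are homotopies) and the elementary identity $p_{i} \circ (v \times \cdots \times v) = v \circ p^{'}_{i}$ that lets the fibrewise condition cancel. For completeness I would add a remark that the same computation covers the slightly more general situation in which the two base spaces are only homotopy equivalent rather than literally equal: one simply replaces $\alpha = 1_{Y^{n}}$ by the $n$-fold product of the given equivalence of bases.
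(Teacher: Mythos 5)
Your argument is correct and follows the same overall strategy as the paper: reduce the claim to Proposition \ref{p2} with $\alpha = 1_{Y^{n}}$. The one substantive difference is the choice of $\beta$, and yours is the better one. The paper takes $\beta = d_{n} \circ v \circ \overline{p}_{i}$ (with claimed inverse $\overline{d_{n}} \circ u \circ p_{i}$); this map depends on $i$, whereas Proposition \ref{p2} calls for a single homotopy equivalence $\beta$ that works for all $i$ simultaneously, and moreover it is not a homotopy equivalence: the composite $(\overline{d_{n}} \circ u \circ p_{i}) \circ (d_{n} \circ v \circ \overline{p}_{i})$ is homotopic to $\overline{d_{n}} \circ \overline{p}_{i}$, not to the identity of $\overline{X}^{n}$. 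Your $\beta = v \times \cdots \times v$, with inverse $u \times \cdots \times u$, is a genuine homotopy equivalence independent of $i$, and the identity $p_{i} \circ (v \times \cdots \times v) = v \circ p^{'}_{i}$ combined with the fibrewise relation $f \circ v = f^{'}$ gives the required (even strict) commutativity $\alpha \circ (f \circ p_{i}) \circ \beta = f^{'} \circ p^{'}_{i}$. So your proof is the correct implementation of the intended argument; the only hypothesis you invoke beyond the homotopies $u \circ v \simeq 1$, $v \circ u \simeq 1$ is the fibrewise condition $f \circ v = f^{'}$, which is part of the definition of fiber homotopy equivalence and is in fact also needed (implicitly) in the paper's own verification of commutativity.
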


\begin{proof}
	Let $f = (f_{1}, \cdots, f_{n}) : X \rightarrow Y^{n}$ and $f^{'} = (f^{'}_{1}, \cdots, f^{'}_{n}) : \overline{X} \rightarrow Y^{n}$ be any fibrations for which there are two functions $u : X \rightarrow \overline{X}$ and $v : \overline{X} \rightarrow X$. These satisfy $u \circ v \simeq 1_{\overline{X}}$ and $v \circ u \simeq 1_{X}$. For each $i \in \{1, \cdots, n\}$, define $\alpha = 1_{Y^{n}}$ and $\beta = d_{n} \circ v \circ \overline{p}_{i}$, where $d_{n} : X \rightarrow X^{n}$ denotes the diagonal function and $\overline{p}_{i} : \overline{X}^{n} \rightarrow \overline{X}$ is the projection. It follows that $\alpha$ and $\beta$ are two homotopy equivalences. Indeed, $\overline{d_{n}} \circ u \circ p_{i}$ is the homotopy inverse of $\beta$, where $\overline{d_{n}} : \overline{X} \rightarrow \overline{X}^{n}$ denotes the diagonal function and $p_{i} : X^{n} \rightarrow X$ is the projection. Then the diagram
	\begin{displaymath}
	\xymatrix{
		X^{n} \ar[r]^{f \circ p_{i}} &
		Y^{n} \ar[d]^{\alpha} \\
		\overline{X}^{n} \ar[u]^{\beta} \ar[r]_{f^{'} \circ \overline{p}_{i}} & Y^{n}}
	\end{displaymath}
	commutes for each $i$. By Proposition \ref{p2}, we get
	\begin{eqnarray*}
		D(f \circ p_{1}, \cdots, f \circ p_{n}) = D(f^{'} \circ \overline{p}_{1}, \cdots, f^{'} \circ \overline{p}_{n}).
	\end{eqnarray*}
    This proves that $TC_{n}(f) = TC_{n}(f^{'})$ for two fiber homotopy equivalent fibrations $f$ and $f^{'}$.
\end{proof}

\begin{proposition}\label{p3}
	TC$_{n}(f) \leq$ TC$_{n}(X)$ for a fibration $f = (f_{1}, \cdots, f_{n}) : X \rightarrow Y^{n}$.
\end{proposition}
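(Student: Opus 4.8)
The plan is to reduce the inequality to a single application of Proposition \ref{p9}(ii). First I would rewrite both sides purely in terms of the higher homotopic distance. By Definition \ref{d1} we have $\text{TC}_{n}(f) = D(f \circ p_{1}, \cdots, f \circ p_{n})$, where $p_{i} : X^{n} \rightarrow X$ are the projections, and by part (ii) of the Theorem giving the distance-theoretic descriptions of the classical invariants, $\text{TC}_{n}(X) = D(p_{1}, \cdots, p_{n})$ for the very same projections. So the claim is exactly
\[
D(f \circ p_{1}, \cdots, f \circ p_{n}) \leq D(p_{1}, \cdots, p_{n}).
\]

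Next I would invoke Proposition \ref{p9}(ii) with the maps $f_{i} := p_{i} : X^{n} \rightarrow X$ and $g_{i} := f : X \rightarrow Y^{n}$ for every $i = 1, \cdots, n$. The hypothesis $g_{i} \simeq g_{i+1}$ for $i = 1, \cdots, n-1$ is satisfied trivially, since all the $g_{i}$ are literally the same map $f$ (so they are even equal, not merely homotopic). Proposition \ref{p9}(ii) then yields
\[
D(g_{1} \circ f_{1}, \cdots, g_{n} \circ f_{n}) = D(f \circ p_{1}, \cdots, f \circ p_{n}) \leq D(p_{1}, \cdots, p_{n}) = \text{TC}_{n}(X),
\]
which is the desired bound $\text{TC}_{n}(f) \leq \text{TC}_{n}(X)$.

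There is essentially no obstacle here: the only thing to be careful about is matching the notation, namely that the projections $p_{i} : X^{n} \rightarrow X$ appearing in Definition \ref{d1} are the same ones used in the distance description of $\text{TC}_{n}(X)$, so that Proposition \ref{p9}(ii) applies verbatim. Alternatively, one could argue directly from coverings: if $X^{n} = U_{1} \cup \cdots \cup U_{l}$ witnesses $\text{TC}_{n}(X) = l$, so that $p_{1}|_{U_{j}} \simeq \cdots \simeq p_{n}|_{U_{j}}$ for each $j$, then post-composing these homotopies with $f$ gives $f \circ p_{1}|_{U_{j}} \simeq \cdots \simeq f \circ p_{n}|_{U_{j}}$ on the same open sets, so the same covering witnesses $\text{TC}_{n}(f) \leq l$. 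Either route is short; I would present the one-line version via Proposition \ref{p9}(ii).
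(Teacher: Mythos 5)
Your proof is correct and is exactly the paper's argument: the paper also deduces the inequality directly from Proposition \ref{p9}(ii), applied with the projections $p_{i}$ as the inner maps and $f$ (repeated $n$ times) as the outer maps. Your spelled-out covering argument is a fine elaboration of the same idea, not a different route.
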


\begin{proof}
	The proof follows directly from Proposition \ref{p9} (ii).
\end{proof}

\begin{proposition}\label{p4}
	Let $n>1$. Then TC$(f) \leq$ TC$_{n}(X)$ for a fibration $f : X \rightarrow Y$.
\end{proposition}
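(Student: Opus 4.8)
The plan is to establish the estimate through the chain $\text{TC}(f) = \text{TC}_2(f) \le \text{TC}_n(f) \le \text{TC}_n(X)$, reading the three (in)equalities off Proposition \ref{p5}, Proposition \ref{p6}, and Proposition \ref{p3} respectively. Throughout, for a fibration $f : X \to Y$ and any $k \ge 2$, I read $\text{TC}_k(f)$ as $\text{D}(f \circ p_1, \dots, f \circ p_k)$, where $p_i : X^k \to X$ are the projections and $f \circ p_i : X^k \to Y$; this is the natural extension of Definition \ref{d1} to a target that need not be a power of $Y$, and it is exactly the form in which the proofs of Propositions \ref{p3}, \ref{p5} and \ref{p6} are carried out.

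First, since $f$ is a surjective fibration, Proposition \ref{p5} identifies $\text{TC}(f)$ (in the sense of Theorem \ref{t2}) with $\text{TC}_2(f) = \text{D}(f \circ p_1, f \circ p_2)$. Next, applying Proposition \ref{p6} repeatedly along $k = 2, 3, \dots, n-1$ gives $\text{TC}_2(f) \le \text{TC}_3(f) \le \cdots \le \text{TC}_n(f)$ (for $n = 2$ this chain is empty). Finally, Proposition \ref{p3} — whose proof is nothing but Proposition \ref{p9}(ii) with all outer maps equal to $f$ — yields $\text{TC}_n(f) = \text{D}(f \circ p_1, \dots, f \circ p_n) \le \text{D}(p_1, \dots, p_n) = \text{TC}_n(X)$. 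Concatenating these, $\text{TC}(f) \le \text{TC}_n(X)$, and the hypothesis $n > 1$ is precisely what is needed, since for $n = 1$ the right-hand side would be $1$.

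A slightly shorter route bypasses Proposition \ref{p6}: by Proposition \ref{p9}(ii) applied with $g_1 = g_2 = f$ and $f_i = p_i : X^2 \to X$, one has $\text{TC}_2(f) = \text{D}(f \circ p_1, f \circ p_2) \le \text{D}(p_1, p_2) = \text{TC}_2(X)$; combined with Proposition \ref{p5} this gives $\text{TC}(f) \le \text{TC}_2(X)$, and $\text{TC}_2(X) \le \text{TC}_n(X)$ for every $n \ge 2$ (for $n \ge 3$ this is the stated monotonicity $\text{TC}_k \le \text{TC}_r$ whenever $k+1 \le r$, and for $n = 2$ it is an equality).

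The only genuinely delicate point — the ``main obstacle'', such as it is — is the mismatch of codomains: Definition \ref{d1} and Propositions \ref{p3}, \ref{p6} are phrased for a fibration into a power $Y^n$, whereas here $f : X \to Y$. This is harmless, because the higher homotopic distance $\text{D}(f \circ p_1, \dots, f \circ p_k)$ is defined for maps into an arbitrary space and all the invoked arguments use only Proposition \ref{p9}; equivalently, replacing $f$ by $\Delta_{Y^k} \circ f : X \to Y^k$ (with $\Delta$ the diagonal) changes none of the relevant $\text{D}$-values, since a covering witnesses $f \circ p_i|_U \simeq f \circ p_j|_U$ if and only if it witnesses $(\Delta \circ f) \circ p_i|_U \simeq (\Delta \circ f) \circ p_j|_U$. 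Once this identification is made, each step of the chain is an immediate citation, so no real difficulty remains beyond this bookkeeping.
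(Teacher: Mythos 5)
Your proposal is correct, and your ``shorter route'' ($\text{TC}(f) = \text{D}(f \circ p_1, f \circ p_2) \le \text{D}(p_1,p_2) \le \text{D}(p_1,\dots,p_n) = \text{TC}_n(X)$ via Proposition \ref{p9}(ii) and (i)) is exactly the paper's proof. Your primary chain through $\text{TC}_k(f)$ is only a cosmetic reshuffling of the same two facts, and your remark about extending Definition \ref{d1} to the codomain $Y$ rather than $Y^n$ is a fair point of bookkeeping that the paper glosses over as well.
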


\begin{proof}
	By Proposition \ref{p9} (i) and (ii), we obtain
	\begin{eqnarray*}
		TC(f) = D(f \circ p_{1},f \circ p_{2}) \leq D(p_{1},p_{2}) \leq D(p_{1}, \cdots, p_{n}) = TC_{n}(X).
	\end{eqnarray*}
\end{proof}

\quad We state a confirmation for the following well-known result \cite{BasGonRudTam:2014}:

\begin{theorem}\label{t3} 
	Let $X$ be a path-connected space. Then we have \begin{eqnarray*}
		\text{cat}(X^{n-1}) \leq \text{TC}_{n}(X) \leq \text{cat}(X^{n}).
	\end{eqnarray*}
\end{theorem}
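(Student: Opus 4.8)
The plan is to prove the two inequalities separately, each time reducing to the identity $\mathrm{TC}_{n}(X)=\mathrm{D}(p_{1},\dots,p_{n})$, where $p_{i}\colon X^{n}\to X$ is the $i$-th projection, and then invoking Proposition~\ref{p9} and the bare definition of $\mathrm{cat}$.

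For the upper bound $\mathrm{TC}_{n}(X)\le\mathrm{cat}(X^{n})$, I would first note that $X^{n}$ is path-connected because $X$ is, so Proposition~\ref{p9}(iii), applied to the maps $p_{1},\dots,p_{n}\colon X^{n}\to X$ between path-connected spaces, gives $\mathrm{D}(p_{1},\dots,p_{n})\le\mathrm{cat}(X^{n})$; rewriting the left-hand side as $\mathrm{TC}_{n}(X)$ closes this half. This direction is essentially immediate from the machinery already assembled.

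For the lower bound $\mathrm{cat}(X^{n-1})\le\mathrm{TC}_{n}(X)$, I would fix a point $x_{0}\in X$ and take the inclusion $\iota\colon X^{n-1}\to X^{n}$, $\iota(x_{1},\dots,x_{n-1})=(x_{1},\dots,x_{n-1},x_{0})$. The first step is the obvious $n$-ary analogue of Proposition~\ref{p8}(ii): pulling back along $\iota$ a covering of $X^{n}$ that realizes $\mathrm{D}(p_{1},\dots,p_{n})$, and restricting the witnessing homotopies, yields $\mathrm{D}(p_{1}\circ\iota,\dots,p_{n}\circ\iota)\le\mathrm{D}(p_{1},\dots,p_{n})=\mathrm{TC}_{n}(X)$. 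Now $p_{i}\circ\iota=q_{i}$, the $i$-th projection $X^{n-1}\to X$, for $i<n$, while $p_{n}\circ\iota$ is the constant map $c$ at $x_{0}$, so it remains to show $\mathrm{cat}(X^{n-1})\le\mathrm{D}(q_{1},\dots,q_{n-1},c)$. Given a covering $X^{n-1}=U_{1}\cup\dots\cup U_{l}$ realizing the right-hand side, on each $U_{j}$ the maps $q_{1}|_{U_{j}},\dots,q_{n-1}|_{U_{j}},c|_{U_{j}}$ are mutually homotopic, so each $q_{i}|_{U_{j}}$ is nullhomotopic; since the inclusion $U_{j}\hookrightarrow X^{n-1}$ has coordinate functions $q_{1}|_{U_{j}},\dots,q_{n-1}|_{U_{j}}$ and a map into a product is nullhomotopic as soon as each of its coordinates is (combine the coordinate homotopies), each $U_{j}$ is categorical in $X^{n-1}$. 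Hence $\mathrm{cat}(X^{n-1})\le l$, and chaining the inequalities gives the claim.

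I do not anticipate a serious obstacle; both halves come down to bookkeeping with Proposition~\ref{p9} and the coordinatewise behaviour of homotopies into a product. The only two points that merit a sentence of justification are the $n$-ary precomposition inequality (the pullback-of-covers argument, a routine extension of Proposition~\ref{p8}(ii)) and the remark that the extra homotopies $q_{i}|_{U_{j}}\simeq q_{i'}|_{U_{j}}$ forced by the definition of $\mathrm{D}(q_{1},\dots,q_{n-1},c)$ do no harm: they only shrink the admissible coverings, which can only enlarge the right-hand side of $\mathrm{cat}(X^{n-1})\le\mathrm{D}(q_{1},\dots,q_{n-1},c)$.
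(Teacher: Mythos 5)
Your proposal is correct, and for the harder half it takes a genuinely different --- and sounder --- route than the paper. The upper bound $\mathrm{TC}_{n}(X)\le\mathrm{cat}(X^{n})$ is handled identically in both: apply Proposition~\ref{p9}(iii) to $p_{1},\dots,p_{n}\colon X^{n}\to X$. For the lower bound, the paper introduces an inclusion $i_{1}$ and asserts the chain $\mathrm{cat}(X^{n-1})=\mathrm{D}(1_{X^{n-1}},x')=\mathrm{D}(p_{2}\circ i_{1},p_{1}\circ i_{1})\le\mathrm{D}(p_{1},p_{2})$, only afterwards enlarging to $\mathrm{D}(p_{1},\dots,p_{n})$ via Proposition~\ref{p9}(i). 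As written this passes through the intermediate claim $\mathrm{cat}(X^{n-1})\le\mathrm{D}(p_{1},p_{2})=\mathrm{TC}(X)$, which is false in general (take $X=S^{1}$, $n=3$: $\mathrm{cat}(T^{2})=3>2=\mathrm{TC}(S^{1})$ in the paper's unreduced convention), so the paper's argument for this inequality is at best garbled. You instead keep all $n$ projections in play: precompose with $\iota(x_{1},\dots,x_{n-1})=(x_{1},\dots,x_{n-1},x_{0})$ to get $\mathrm{D}(q_{1},\dots,q_{n-1},c)\le\mathrm{D}(p_{1},\dots,p_{n})$, and then observe that on each set of a witnessing cover every coordinate of the inclusion $U_{j}\hookrightarrow X^{n-1}$ is nullhomotopic, so the inclusion itself is, whence $\mathrm{cat}(X^{n-1})\le\mathrm{D}(q_{1},\dots,q_{n-1},c)$. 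This is the standard Basabe--Gonz\'alez--Rudyak--Tamaki argument, and it is the correct one; the two points you flag (the $n$-ary precomposition inequality, a routine pullback of covers extending Proposition~\ref{p8}(ii), and the coordinatewise assembly of nullhomotopies into the product, using path-connectedness of $X$ to land at the common basepoint) are exactly the ones needing a line of justification and are both unproblematic.
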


\begin{proof}
	Let $i_{k} : X \rightarrow X^{n-1}$ be the inclusion map such that $kth-$component of $i_{k}(x)$ is $x^{'} \in X^{n-1}$, and the other components are always $x$ for $1 \leq k \leq n-1$. Then we find
	\begin{eqnarray*}
		cat(X^{n-1}) = D(1_{X^{n-1}},x^{'}) = D(p_{2} \circ i_{1},p_{1} \circ i_{1}) \leq D(p_{1},p_{2}). 
	\end{eqnarray*}
    Proposition \ref{p9} (i) says that
    \begin{eqnarray*}
    	D(p_{1},p_{2}) \leq D(p_{1}, \cdots, p_{n}) = TC_{n}(X),
    \end{eqnarray*}
    which proves the first inequality. On the other hand, consider the projection maps $p_{1}, \cdots, p_{n} : X^{n} \rightarrow X$ on the path-connected space $X$. By Proposition \ref{p9} (iii), we have
    \begin{eqnarray*}
    	TC_{n}(X) = D(p_{1}, \cdots, p_{n}) \leq cat(X^{n}),
    \end{eqnarray*}
    which completes the proof.
\end{proof}

\begin{corollary} \label{c1}
	If $X$ or $Y$ is contractible, then TC$_{n}(f) = 1$ for a fibration \linebreak$f = (f_{1}, \cdots, f_{n}) : X \rightarrow Y^{n}$.
\end{corollary}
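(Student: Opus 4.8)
The plan is to reduce the statement to the two normalization facts that the (higher) topological complexity and the LS-category of a contractible space equal $1$ in the conventions fixed in Section~\ref{sec:1}. Recall from Definition~\ref{d1} that
\[
\text{TC}_{n}(f) = \text{D}(f \circ p_{1}, \cdots, f \circ p_{n}),
\]
where $p_{i} : X^{n} \rightarrow X$ are the projections, and that by Definition~\ref{d3} a homotopic distance is always a positive integer, so $\text{TC}_{n}(f) \geq 1$. Hence it suffices to establish the reverse bound $\text{TC}_{n}(f) \leq 1$ in each case; concretely, this amounts to checking that the trivial cover $X^{n} = U_{1}$ witnesses $f \circ p_{1} \simeq \cdots \simeq f \circ p_{n}$ as maps $X^{n} \rightarrow Y^{n}$.

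First I would treat the case in which $X$ is contractible. Then $X^{n}$ is contractible as well, so $\text{cat}(X^{n}) = 1$. Combining Proposition~\ref{p3} with the right-hand inequality of Theorem~\ref{t3} gives
\[
\text{TC}_{n}(f) \leq \text{TC}_{n}(X) \leq \text{cat}(X^{n}) = 1,
\]
and together with $\text{TC}_{n}(f) \geq 1$ this yields $\text{TC}_{n}(f) = 1$. Next I would treat the case in which $Y$ is contractible. Then $Y^{n}$ is contractible, so each $f \circ p_{i} : X^{n} \rightarrow Y^{n}$ is null-homotopic, hence the maps $f \circ p_{1}, \cdots, f \circ p_{n}$ are all homotopic to one another and the cover $X^{n} = U_{1}$ already works; equivalently, by Proposition~\ref{p9}~(iv) applied to $f \circ p_{i} : X^{n} \rightarrow Y^{n}$,
\[
\text{TC}_{n}(f) = \text{D}(f \circ p_{1}, \cdots, f \circ p_{n}) \leq \text{TC}_{n}(Y^{n}) = 1,
\]
since $\text{TC}_{n}$ of the contractible space $Y^{n}$ is $1$ (again by Theorem~\ref{t3} applied to $Y^{n}$, whose powers are all contractible). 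As before, the lower bound $\text{TC}_{n}(f) \geq 1$ forces equality.

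I do not expect any genuine obstacle here: the only point that requires a moment's care is the bookkeeping of the normalization, namely that in this paper $\text{cat}$ and $\text{TC}_{n}$ of a contractible space take the value $1$ rather than $0$, so one must pair the upper bounds coming from Proposition~\ref{p3}, Theorem~\ref{t3} and Proposition~\ref{p9}~(iv) with the trivial lower bound $\text{TC}_{n}(f) \geq 1$ built into Definition~\ref{d3}. Everything else is immediate from the results already proved above.
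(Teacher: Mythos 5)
Your proposal is correct and follows essentially the same route as the paper's own proof: for contractible $X$ it chains Proposition~\ref{p3} with $\text{TC}_{n}(X)\leq\text{cat}(X^{n})=1$ from Theorem~\ref{t3}, and for contractible $Y$ it applies Proposition~\ref{p9}~(iv) to get $\text{TC}_{n}(f)\leq\text{TC}_{n}(Y^{n})=1$. The only difference is that you make the trivial lower bound $\text{TC}_{n}(f)\geq 1$ explicit, which the paper leaves implicit.
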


\begin{proof}
	Let $X$ be contractible. Then $cat(X^{n}) = 1$. From Theorem \ref{t3}, we obtain $TC_{n}(X) = 1$. By Proposition \ref{p3}, we conclude that $TC_{n}(f) = 1$. Let $Y$ be contractible. Hence, $Y^{n}$ is contractible. In a similar manner, we get $TC_{n}(Y^{n}) = 1$. By Proposition \ref{p9} (iv), we find
	\begin{eqnarray*}
		TC_{n}(f) = D(f \circ p_{1}, \cdots, f \circ p_{n}) \leq TC_{n}(Y^{n}) = 1.
	\end{eqnarray*} 
    As a result, $TC_{n}(f) = 1$ when $Y$ is contractible.
\end{proof}

\begin{example}
	Let $f_{i} : X \rightarrow \{y_{0}\}$ be a fibration for each $i \in \{1,\cdots,n\}$, where $y_{0}$ is any point of $Y$. Then, by Corollary \ref{c1}, we get $TC_{n}(f) = 1$ for a fibration \linebreak $f = (f_{1}, \cdots, f_{n}) : X \rightarrow \{y_{0}\}$.
\end{example} 
 
\begin{corollary}
	Let $n>2$. For a fibration $f : X \rightarrow Y$ with path-connected $X$ and $Y$, we have 
	\begin{eqnarray*}
		TC(f) \leq \text{cat}(X^{n-1}) \leq TC_{n}(X) \leq \text{cat}(X^{n}).
	\end{eqnarray*}
\end{corollary}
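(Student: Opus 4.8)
The plan is to assemble the four-term chain from results already proved, each invoked at the appropriate value of the index parameter. The two rightmost inequalities, $\text{cat}(X^{n-1}) \leq TC_{n}(X) \leq \text{cat}(X^{n})$, are nothing but Theorem \ref{t3} applied to the path-connected space $X$ at level $n$, so no new work is required there.

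For the leftmost inequality $TC(f) \leq \text{cat}(X^{n-1})$ I would route through $TC_{n-1}(X)$. The hypothesis $n>2$ gives $n-1>1$, so Proposition \ref{p4}, applied with $n-1$ in place of $n$, yields $TC(f) \leq TC_{n-1}(X)$. Next, invoking Theorem \ref{t3} at level $n-1$ (again legitimate because $X$ is path-connected) gives in particular its upper estimate $TC_{n-1}(X) \leq \text{cat}(X^{n-1})$. Composing the two comparisons produces $TC(f) \leq \text{cat}(X^{n-1})$, and stitching this onto $\text{cat}(X^{n-1}) \leq TC_{n}(X) \leq \text{cat}(X^{n})$ completes the chain.

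The only thing to watch is the bookkeeping of indices: simply combining Proposition \ref{p4} in the form $TC(f) \leq TC_{n}(X)$ with the left half of Theorem \ref{t3} in the form $\text{cat}(X^{n-1}) \leq TC_{n}(X)$ does \emph{not} yield $TC(f) \leq \text{cat}(X^{n-1})$, since both of those bound quantities against $TC_{n}(X)$ from the same side. The substantive observation is that passing one level down and using the \emph{upper} bound of Theorem \ref{t3} at level $n-1$ supplies exactly the missing comparison; this is also where the assumption $n>2$ is genuinely needed, so that $TC_{n-1}(X)$ is defined and Proposition \ref{p4} applies. Once this is recognized the argument is purely formal, and I do not anticipate any real obstacle.
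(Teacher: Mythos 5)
Your argument is correct, and it agrees with the paper in its overall structure: both reduce the corollary to the single new inequality $\mathrm{TC}(f) \leq \mathrm{cat}(X^{n-1})$, the remaining chain $\mathrm{cat}(X^{n-1}) \leq \mathrm{TC}_{n}(X) \leq \mathrm{cat}(X^{n})$ being exactly Theorem \ref{t3}. Where you differ is in how that left inequality is obtained. The paper writes $\mathrm{TC}(f)$ as a homotopic distance of two maps defined on a square of $X$ and applies Proposition \ref{p9}~(iii) directly to get $\mathrm{TC}(f) \leq \mathrm{cat}(X^{2})$, then uses $\mathrm{cat}(X^{2}) \leq \mathrm{cat}(X^{n-1})$ (monotonicity of $\mathrm{cat}$ under products of path-connected spaces, which is where $n>2$ enters for them). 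You instead route through $\mathrm{TC}_{n-1}(X)$: Proposition \ref{p4} at level $n-1$ (legitimate since $n-1>1$) gives $\mathrm{TC}(f) \leq \mathrm{TC}_{n-1}(X)$, and the upper bound of Theorem \ref{t3} at level $n-1$ gives $\mathrm{TC}_{n-1}(X) \leq \mathrm{cat}(X^{n-1})$. Both routes ultimately rest on Proposition \ref{p9}~(iii); the paper's intermediate bound $\mathrm{cat}(X^{2})$ is in general sharper than your $\mathrm{TC}_{n-1}(X)$ when $n$ is large, while your version has the advantage of not invoking product monotonicity of $\mathrm{cat}$ as a separate unreferenced fact and of recording the extra comparison $\mathrm{TC}(f) \leq \mathrm{TC}_{n-1}(X)$ along the way. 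Your closing remark about the index bookkeeping is exactly right: the naive combination of Proposition \ref{p4} and Theorem \ref{t3} at the same level $n$ bounds both quantities by $\mathrm{TC}_{n}(X)$ from the same side and proves nothing, and dropping to level $n-1$ is precisely where the hypothesis $n>2$ is consumed.
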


\begin{proof}
	By Proposition \ref{p4} and Theorem $3.16$, it is enough to prove that \linebreak $TC(f) \leq$ $cat(X^{n-1})$ for $n>2$. Proposition \ref{p9} (iii) gives us that
	\begin{eqnarray*}
		TC(f) = D(f^{'} \circ p_{1}, f^{'} \circ p_{2}) \leq cat(X^{2}) \leq cat(X^{n-1})
	\end{eqnarray*}
    for a fibration $f^{'} = (f_{1},f_{2}) : X \rightarrow Y^{2}$.
\end{proof}

\begin{example}
	Let $X$ be any space such that $f = (p_{1}, \cdots, p_{n}) : X^{n} \rightarrow X^{n}$ is a fibration whose components are projections $p_{i} : X^{n} \rightarrow X$ for $i \in \{1,\cdots,n\}$. Then, by using Proposition \ref{p3}, and Corollary 3.14 of \cite{BorVer:2021}, we get
	\begin{eqnarray*}
		TC_{n}(f) \leq TC_{n}(X^{n})
		\leq TC_{n}(X) + TC_{n}(X) + \cdots + TC_{n}(X) = n\cdot TC_{n}(X). 
	\end{eqnarray*}
\end{example}

\section{The Higher Topological Complexity Using Schwarz Genus}
\label{sec:3}

\quad Besides the higher homotopic distance, we also interpret $TC_{n}(f)$ by using the well-known concept Schwarz genus of a fibration.

\begin{lemma}
    For a surjective fibration $f = (f_{1}, \cdots, f_{n}) : X \rightarrow Y^{n}$, the induced map
	\begin{eqnarray*}
		&&e_{n}^{f} : X^{J_{n}} \longrightarrow Y^{n}\\
		&&\hspace*{0.8cm} \alpha \longmapsto e_{n}^{f}(\alpha) = (f_{1}(\alpha_{1}(1)), \cdots, f_{n}(\alpha_{n}(1)))
	\end{eqnarray*}
    is a fibration. 
\end{lemma}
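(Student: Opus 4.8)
The plan is to realise $e_n^f$ as a composite of two maps that are already known to be fibrations. First I would record the factorisation
\[
e_n^f \;=\; (f_1 \times \cdots \times f_n)\circ q ,
\]
where $q : X^{J_n} \to X^n$ is the endpoint evaluation $q(\alpha)=(\alpha_1(1),\dots,\alpha_n(1))$ --- i.e.\ the multipath map of Rudyak's Definition \ref{d5} with $X$ in place of $Y$ --- and $f_1\times\cdots\times f_n : X^n\to Y^n$ is the coordinatewise product of the components of $f$. The identity is immediate: $(f_1\times\cdots\times f_n)(\alpha_1(1),\dots,\alpha_n(1)) = (f_1(\alpha_1(1)),\dots,f_n(\alpha_n(1)))=e_n^f(\alpha)$. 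Note that one cannot instead factor through $Y^{J_n}$, since the paths $f_i\circ\alpha_i$ need not share a common initial point; routing through $X^n$ is what makes the decomposition work.

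Second, I would check that both factors are fibrations. For $q$: the set $E=\{1_1,\dots,1_n\}$ of free endpoints of $J_n$ is a subcomplex of the finite graph $J_n$, so $E\hookrightarrow J_n$ is a cofibration, and hence the restriction map on function spaces $X^{J_n}\to X^{E}=X^n$, which is precisely $q$, is a (Hurewicz) fibration; this is exactly the standard fact underlying the well-definedness of Definition \ref{d5}. For $f_1\times\cdots\times f_n$: since $f=(f_1,\dots,f_n)$ is a fibration, each $f_i:X\to Y$ is a fibration --- as observed just before Definition \ref{d1}, because $p_i\circ f=f_i$ and projections are fibrations --- and a finite product of fibrations is a fibration, so $f_1\times\cdots\times f_n$ is a fibration.

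Finally, a composite of Hurewicz fibrations is a Hurewicz fibration, so $e_n^f=(f_1\times\cdots\times f_n)\circ q$ is a fibration, which is the assertion. The only step that is not purely formal is the claim that $q:X^{J_n}\to X^n$ is a fibration; if one wishes to avoid quoting the cofibration $E\hookrightarrow J_n$, one can verify the homotopy lifting property for $q$ directly, lifting a given homotopy in $X^n$ coordinate by coordinate via the usual path-space reparametrisation, which presents no difficulty.
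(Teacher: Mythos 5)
Your proposal is correct and is essentially the paper's own argument: the paper likewise factors $e_n^f=(f_1\times\cdots\times f_n)\circ e_n$ through the endpoint-evaluation fibration $e_n:X^{J_n}\to X^n$ (your $q$), notes that each $f_i$ and hence their product is a fibration, and concludes by closure of fibrations under composition. Your extra justification that $q$ is a fibration (via the cofibration of the endpoints into $J_n$) is a detail the paper simply takes as known from the definition of $\mathrm{TC}_n$.
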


\begin{proof}
	Let $e_{n}^{f} = (f_{1} \times f_{2} \times \cdots \times f_{n}) \circ e_{n}$. Since $f$ is a fibration, for each $i$, $f_{i}$ is a fibration. Then $f_{1} \times f_{2} \times \cdots \times f_{n}$ is a fibration. The composition of two fibrations is again a fibration. This completes the proof.
\end{proof}

\begin{definition}\label{d2}
	For a fibration $f = (f_{1}, \cdots, f_{n}) : X \rightarrow Y^{n}$, $TC_{n}(f)$ is defined as $secat(e_{n}^{f} : X^{J_{n}} \rightarrow Y^{n})$.
\end{definition}

\begin{remark}
	 The first observation is TC$_{1}(f) = 1$. Also, we obtain the equality TC$_{n}(f) =$ TC$_{n}(Y)$ when $f$ is the identity $Y \rightarrow Y$. The next proof \cite{Rudyak:2010} confirms Proposition \ref{p6}:
\end{remark}

\textbf{Proof of Proposition 3.9:}
	Let $TC_{n+1}(f) = k$. Then $U_{1}, \cdots, U_{k}$ is an open cover of $Y^{n+1}$ and $s_{i} : U_{i} \rightarrow X^{J_{n+1}}$ is a section of $e_{n+1}^{f}$ for all $i = 1, \cdots, k$. Let \[V_{i} = \{(y_{1}, \cdots, y_{n}) \ : \ (y_{1}, \cdots, y_{n}, a) \in Y^{n+1}\} \subset U_{i}\] for $a \in Y$. Define two maps $h : T_{n+1}(Y) \rightarrow T_{n}(Y)$, $h(\alpha_{1}, \cdots, \alpha_{n+1}) = (\alpha_{1}, \cdots, \alpha_{n})$ and $k : Y^{n} \rightarrow Y^{n+1}$, $k(y_{1}, \cdots y_{n}) = (y_{1}, \cdots, y_{n},a)$, where $T_{n}(Y)$ is a set consists of ordered set of $n$ paths in $Y$. It follows that $V_{1}, \cdots, V_{k}$ is an open cover of $Y^{n}$ and $t_{i} = h \circ s_{i} \circ k$ is a section of $e_{n}^{f}$ on $V_{i}$ for each $i$. Thus, $TC_{n}(f) \leq k$.\\
	\hspace*{12.4cm}$\square$
	
\begin{lemma}\label{l1}
	Given any fibrations $f^{'} : X \rightarrow Y$ and $f = (f_{1},f_{2}) : X \rightarrow Y \times Y$, we have $secat(\pi_{f^{'}} : X^{I} \rightarrow X \times Y) = secat(e_{2}^{f} : X^{J_{2}} \rightarrow Y \times Y)$.
\end{lemma}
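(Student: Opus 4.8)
The plan is to avoid manipulating sections and open covers directly, and instead to recognize each of the two Schwarz genera as an invariant that has already been computed earlier in the paper. The point is that $\pi_{f'}$ is, by construction, the fibration whose Schwarz genus \emph{is} $TC$ of the map $f'$, while $e_{2}^{f}$ is, by construction, the one whose Schwarz genus \emph{is} $TC_{2}(f)$; so the lemma is nothing but the equality $TC(f') = TC_{2}(f)$, which is Proposition \ref{p5} restated. Thus I would reduce everything to a short chain of previously established identities.

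Concretely: by Definition \ref{d6}, applied with $X$ in the role of $Y$, with $Y$ in the role of $Y'$, and with $g = f'$, one has $secat(\pi_{f'}) = TC(f')$ (this uses that $f'$ is surjective, as required by that definition; when $Y$ is path-connected surjectivity of a fibration is automatic). By Definition \ref{d2}, one has $secat(e_{2}^{f}) = TC_{2}(f)$ with no further work. It then remains to identify $TC(f')$ with $TC_{2}(f)$: by Theorem \ref{t2}, $TC(f') = D(f' \circ \pi_{1}, \pi_{2})$ for the projections $\pi_{1} : X \times Y \to X$ and $\pi_{2} : X \times Y \to Y$, and by Definition \ref{d1}, $TC_{2}(f) = D(f \circ p_{1}, f \circ p_{2})$ for the projections $p_{1}, p_{2} : X^{2} \to X$; Proposition \ref{p5} (more precisely, the identity $D(f \circ p_{1}, f \circ p_{2}) = D(f' \circ \pi_{1}, \pi_{2})$ obtained in its proof via the homotopy equivalence $\beta = 1_{X} \times f' : X \times X \to X \times Y$ together with $\alpha = 1_{Y}$) delivers $TC_{2}(f) = TC(f')$. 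Stringing the three identities together gives
\[ secat(\pi_{f'}) \;=\; TC(f') \;=\; TC_{2}(f) \;=\; secat(e_{2}^{f}), \]
which is the assertion.

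The step I expect to require the most care is the middle one: one must check that Proposition \ref{p5} genuinely applies in the present generality, i.e. with $f' : X \to Y$ and $f = (f_{1},f_{2}) : X \to Y \times Y$ merely fibrations and a priori unrelated. It does, since that proposition is stated for arbitrary continuous $f$ and $f'$, its proof needing only surjectivity of $f'$ to produce the homotopy inverse $\gamma(x,y) = (x, x')$ of $\beta$ (with $f'(x') = y$); a surjective fibration is in particular surjective, so nothing is lost. I would make this dependence on surjectivity explicit. A direct section-level proof is conceivable but far clumsier: translating a trivializing cover of $X \times Y$ for $\pi_{f'}$ into one of $Y \times Y$ for $e_{2}^{f}$ and back is awkward because $X \times Y$ and $Y \times Y$ need not be homotopy equivalent, so one would be forced to route through the fibrewise replacements underlying equations (1)--(2) and Theorem \ref{t1}; passing through the homotopic distance circumvents all of this.
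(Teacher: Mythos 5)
There is a genuine gap: your argument is circular at the step $TC_{2}(f) = secat(e_{2}^{f})$. In this paper the symbol $TC_{2}(f)$ carries two a priori distinct meanings: the homotopic-distance version $D(f\circ p_{1}, f\circ p_{2})$ of Definition \ref{d1}, and the Schwarz-genus version $secat(e_{2}^{f})$ of Definition \ref{d2}. Proposition \ref{p5} and Theorem \ref{t2} live entirely on the first side: together they yield $D(f\circ p_{1}, f\circ p_{2}) = D(f'\circ \pi_{1},\pi_{2}) = secat(\pi_{f'})$, and nothing more. Your closing move, ``by Definition \ref{d2}, $secat(e_{2}^{f}) = TC_{2}(f)$ with no further work,'' then silently identifies the Definition \ref{d2} quantity with the Definition \ref{d1} quantity that Proposition \ref{p5} actually computes. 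That identification is proved nowhere else in the paper; for $n=2$ it is exactly the content of Lemma \ref{l1} together with Theorem \ref{t2} and Proposition \ref{p5} --- which is why the corollary immediately following the lemma is carefully phrased as ``$TC_{2}(f)$ \emph{in Definition \ref{d2}} coincides with $TC(f)$ \emph{in Definition \ref{d6}}.'' In short, you assume the bridge between the secat picture and the distance picture in order to build that bridge.

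The paper's own proof does not route through the homotopic distance at all: it compares the two fibrations directly, exhibiting a fibre homotopy equivalence $h : X^{J_{2}} \to X^{I}$ (factored through $X \times X$), a homotopy equivalence $k : Y \times Y \to X \times Y$ of base spaces, checking that the square formed by $e_{2}^{f}$, $\pi_{f'}$, $h$, $k$ commutes, and invoking the invariance of the Schwarz genus under such equivalences (Theorem 6.4 of Rudyak). If you want to salvage your reduction strategy, you would first have to prove independently that $secat(e_{2}^{f}) = D(f\circ p_{1}, f\circ p_{2})$, which is essentially as hard as the lemma itself. Your side remark about surjectivity is well taken (Proposition \ref{p5} does need it, and the lemma's hypotheses as stated do not grant it), but that concern applies equally to the paper's own argument and is secondary to the circularity.
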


\begin{proof}
	Let $\alpha$ be a path in $X^{J_{2}}$ such that $\alpha(0) = x_{0}$, $\alpha_{1}(1) = x_{1}$ and $\alpha_{1}(1) = x_{2}$. Define $h : X^{J_{2}} \rightarrow X^{I}$, $h(\alpha) = \beta$, where $\beta$ is a path from $x_{1}$ to $x_{2}$. $h^{'} : X \times X \rightarrow X^{I}$, $h(x,x^{'})$ denotes a path, its starting point is $x$ and its final point is $x^{'}$. Therefore, $h$ is a fiber homotopy equivalence. Indeed, consider two maps $u : X^{J_{2}} \rightarrow X \times X$, $u(\alpha) = (\alpha_{1}(1),\alpha_{2}(1))$ and $v : X \times X \rightarrow X^{J_{2}}$, $v(x,x^{'}) = \beta$, where $\beta$ is a path from any point of $X$ to two points $\beta_{1}(1) = x$, $\beta_{2}(1) = x^{'}$. Then
	\begin{displaymath}
	\xymatrix{
		X^{J_{2}} \ar[dr]_{h} \ar@<1ex>[rr]^u
 & & X \times X  \ar@<1ex>[ll]^v \ar[dl]^{h^{'}} \\
		& X^{I} & }
	\end{displaymath}
	commutes such that $u \circ v$ and $v \circ u$ are homotopic to respective identity maps $1_{X \times X}$ and $1_{X^{J_{2}}}$. Recall that $f_{1}$, $f_{2}$ and $f^{'}$ are surjective maps. Using this fact, we define a map $k : Y \times Y \rightarrow X \times Y$, $k(y_{1},y_{2}) = (x_{1},f^{'}(x_{2}))$, where $y_{1} = f_{1}(x_{1})$ and $y_{2} = f_{2}(x_{2})$. We shall show that $k$ is one of equivalences of homotopic functions. Consider the map $k^{'} : X \times Y \rightarrow Y \times Y$, $k^{'}(x,y) = (f_{1}(x),f_{2}(x^{'}))$, where $y = f^{'}(x^{'})$. Then $k \circ k^{'}$ and $k^{'} \circ k$ are respective identity maps $1_{X \times Y}$ and $1_{Y \times Y}$. Finally, by Theorem 6.4 of \cite{Rudyak:2016}, the following diagram gives the desired result:
	\begin{displaymath}
	\xymatrix{
		X^{J_{2}} \ar[r]^{h} \ar[d]_{e_{2}^{f}} &
		X^{I} \ar[d]^{\pi_{f^{'}}} \\
		Y \times Y \ar[r]_{k} & X \times Y.}
	\end{displaymath} 	
\end{proof}

\quad By Lemma \ref{l1}, we have the quick result:

\begin{corollary}
	TC$_{2}(f)$ in Definition \ref{d2} coincides with TC$(f)$ in Definition \ref{d6}.
\end{corollary}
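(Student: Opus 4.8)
The plan is to unwind the two definitions and show the relevant Schwarz genera agree. By Definition \ref{d2}, $\text{TC}_2(f) = secat(e_2^f : X^{J_2} \to Y \times Y)$, while by Definition \ref{d6}, $\text{TC}(f) = secat(\pi_f : X^I \to X \times Y)$, where $\pi_f(\beta) = (\beta(0), f\circ\beta(1))$. So the corollary is literally the statement $secat(\pi_f) = secat(e_2^f)$, which is exactly the content of Lemma \ref{l1} once we identify the fibration $f' : X \to Y$ appearing there with our $f = (f_1, f_2)$. The only subtlety is that Lemma \ref{l1} is phrased with an \emph{auxiliary} surjective fibration $f' : X \to Y$ alongside $f = (f_1,f_2) : X \to Y \times Y$, whereas Definition \ref{d6}'s $\text{TC}(g)$ is the topological complexity of the single fibration $g$. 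So I must first say which $f'$ to plug in.

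First I would observe that since $f = (f_1, f_2) : X \to Y \times Y$ is a surjective fibration, each component $f_i : X \to Y$ is a surjective fibration (this is recorded in the paragraph preceding Definition \ref{d1}, and surjectivity of each $f_i$ follows from surjectivity of $f$ since the projections $p_i : Y\times Y \to Y$ are surjective). Take $f' = f_1$. Then Definition \ref{d6} gives $\text{TC}(f) := secat(\pi_{f_1} : X^I \to X \times Y)$ — wait, more precisely $\text{TC}(f)$ in Definition \ref{d6} is $secat(\pi_f)$ for the \emph{surjective fibration} $f$ itself; but Definition \ref{d6} as stated takes a single fibration $g : Y \to Y'$, so the object whose $\text{TC}$ we compute in Definition \ref{d2} is the pair $(f_1,f_2)$, not a single map into $Y$. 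The honest reading is: $\text{TC}_2$ of the pair-fibration $f=(f_1,f_2)$ should agree with $\text{TC}(f')$ of whichever single fibration $f'$ the pair ``represents,'' and Lemma \ref{l1} is precisely the bridge, valid for \emph{any} surjective fibration $f' : X \to Y$ compatible with $f$. So the cleanest statement is: for any surjective fibration $f' : X \to Y$, $\text{TC}(f') = \text{TC}_2(f_1, f_2)$ whenever $f_1, f_2$ are surjective fibrations $X \to Y$; in particular taking $f' = f_1$ (or $f_2$) identifies the two.

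Then the proof is one line: apply Lemma \ref{l1} with this choice of $f'$, obtaining $secat(\pi_{f'} : X^I \to X\times Y) = secat(e_2^f : X^{J_2} \to Y\times Y)$, i.e., $\text{TC}(f') = \text{TC}_2(f)$ by Definitions \ref{d6} and \ref{d2} respectively. I would also remark that this is consistent with Proposition \ref{p5}, which already shows $\text{TC}_2(f)$ in Definition \ref{d1} (the homotopic-distance version) equals $\text{TC}(f)$ in Theorem \ref{t2}; combined with Lemma \ref{l1}, all three descriptions of $\text{TC}_2(f)$ — higher homotopic distance, Schwarz genus of $e_2^f$, and $\text{TC}$ of the associated path-fibration $\pi_{f'}$ — coincide.

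The main obstacle is essentially bookkeeping rather than mathematics: making sure the identification of $f'$ with the data $(f_1,f_2)$ is stated precisely, since Definition \ref{d6} nominally applies to a single map and Definition \ref{d2}'s input is an $n$-tuple. Once that is pinned down, the corollary is immediate from Lemma \ref{l1}. I expect to spend a sentence clarifying that $f_1$ and $f_2$ are surjective fibrations (so that Lemma \ref{l1}'s hypotheses are met), and then invoke the lemma directly.
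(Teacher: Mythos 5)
Your proposal is correct and matches the paper exactly: the paper offers no argument beyond ``By Lemma \ref{l1}, we have the quick result,'' which is precisely your one-line invocation of that lemma. Your extra care in specifying which single surjective fibration $f'$ plays the role of the map in Definition \ref{d6} (taking $f'=f_1$) addresses an ambiguity the paper leaves implicit, but the underlying argument is the same.
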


\begin{theorem}
	Given a fibration $f : X \rightarrow Y^{n}$, $e_{n}^{f} : X^{J_{n}} \rightarrow Y^{n}$ is a fiber homotopy equivalent to the function $q : X \sqcap Y^{J_{n}} \rightarrow Y^{n}$ defined by $q(x,\alpha) = (\alpha_{1}(1), \cdots, \alpha_{n}(1))$.
\end{theorem}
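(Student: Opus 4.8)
The plan is to exhibit an explicit fibre homotopy equivalence over $Y^{n}$ between $e_{n}^{f}$ and $q$, in the spirit of the proofs of Theorem \ref{t1} and Lemma \ref{l1}. First one records that both maps are fibrations over $Y^{n}$: the map $e_{n}^{f}$ by the Lemma preceding Definition \ref{d2}, and $q$ because $X \sqcap Y^{J_{n}}$ is the space of pairs $(x,\alpha)$ with $\alpha=(\alpha_{1},\dots,\alpha_{n})$ a family of paths in $Y$ satisfying $\alpha_{i}(0)=f_{i}(x)$ for all $i$, that is, the pullback of the evaluation-at-$0$ fibration $(\alpha_{1},\dots,\alpha_{n})\mapsto(\alpha_{1}(0),\dots,\alpha_{n}(0))$ along $f$; hence $q$ factors as this pullback followed by the evaluation-at-$1$ fibration.

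Next I define the comparison map $u : X^{J_{n}} \to X \sqcap Y^{J_{n}}$ by pushing multipaths forward: a multipath $\beta=(\beta_{1},\dots,\beta_{n})$ with common initial point $x_{0}=\beta_{i}(0)$ is sent to $u(\beta)=(x_{0},(f_{1}\circ\beta_{1},\dots,f_{n}\circ\beta_{n}))$, which lies in $X \sqcap Y^{J_{n}}$ since $(f_{i}\circ\beta_{i})(0)=f_{i}(x_{0})$, and which satisfies $q\circ u=e_{n}^{f}$ because $q(u(\beta))=(f_{1}(\beta_{1}(1)),\dots,f_{n}(\beta_{n}(1)))=e_{n}^{f}(\beta)$; thus $u$ is fibrewise. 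A fibrewise map in the other direction, $v : X \sqcap Y^{J_{n}} \to X^{J_{n}}$, is obtained from lifting functions $\Lambda_{i}$ of the fibrations $f_{i}:X\to Y$ (each $f_{i}$ is a fibration, as noted before Definition \ref{d1}): set $v(x,(\alpha_{1},\dots,\alpha_{n}))=(\Lambda_{1}(x,\alpha_{1}),\dots,\Lambda_{n}(x,\alpha_{n}))$, a multipath in $X$ all of whose components start at $x$ and satisfy $f_{i}\circ\Lambda_{i}(x,\alpha_{i})=\alpha_{i}$. This yields $e_{n}^{f}\circ v=q$ and, on the nose, $u\circ v=1_{X\sqcap Y^{J_{n}}}$.

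What remains — and this is the only delicate point — is the relation $v\circ u\simeq 1_{X^{J_{n}}}$ through a homotopy over $Y^{n}$. Here $(v\circ u)(\beta)$ replaces each component $\beta_{i}$ by \emph{some} $f_{i}$-lift of $f_{i}\circ\beta_{i}$ with the same initial point, which need not be $\beta_{i}$ itself. One could build the homotopy directly by feeding the constant homotopy of $f_{i}\circ\beta_{i}$ together with the initial datum $\beta_{i}$ into the homotopy lifting property of $f_{i}$; since this keeps every $f_{i}\circ(\cdot)$ constant it keeps $e_{n}^{f}$ constant, so the homotopy is fibrewise. Cleaner, however, is to avoid it: both $X^{J_{n}}$ and $X\sqcap Y^{J_{n}}$ deformation retract onto $X$ by shrinking each path to its initial point, $u$ is carried to $1_{X}$ under these retractions, hence $u$ is an ordinary homotopy equivalence; being a fibrewise map between fibrations over $Y^{n}$, Dold's theorem (Theorem 6.4 of \cite{Rudyak:2016}, invoked the same way in the proof of Lemma \ref{l1}) then promotes $u$ to a fibre homotopy equivalence. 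I expect the non-canonicity of path lifts in this last step to be the main obstacle, and the Dold-theorem route to be the shortest way past it.
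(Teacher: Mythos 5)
Your proof is correct and follows essentially the same route as the paper's: the same comparison map $u(\beta)=(\beta(0),(f_1\circ\beta_1,\dots,f_n\circ\beta_n))$ over $Y^n$, with an inverse built from lifts of the $\alpha_i$ starting at $x$. Where you go beyond the paper is only in rigor: the paper merely asserts the existence of $v$ and the homotopies $u\circ v\simeq 1$ and $v\circ u\simeq 1$, whereas you make $v$ explicit via lifting functions $\Lambda_i$ and, for the delicate relation $v\circ u\simeq 1_{X^{J_n}}$, correctly fall back on Dold's theorem (Theorem 6.4 of \cite{Rudyak:2016}, the same tool the paper uses in Lemma \ref{l1}), which is exactly the missing justification.
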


\begin{proof}
	Let $\pi_{2} : X \sqcap Y^{J_{n}} \rightarrow Y^{J_{n}}$. Since $q = e_{n} \circ \pi_{2}$, $q$ is a fibration. Define a function $u : X^{J_{n}} \rightarrow X \sqcap Y^{J_{n}}$ by $u(\alpha) = (\alpha(0),(f_{1} \times \cdots \times f_{n}) \circ \alpha)$, where $\alpha$ is a path with endpoints $x_{1}, \cdots, x_{n}$, i.e., $\alpha_{1}(1) = x_{1}$, $\cdots$, $\alpha_{n}(1) = x_{n}$, and $v : X \sqcap Y^{J_{n}} \rightarrow X^{J_{n}}$ by $v(x,\beta) = \alpha$, where $\beta$ is a path from $x$ to $(f_{1}(x_{1}), \cdots, f_{n}(x_{n}))$. Then $u \circ v$ and $v \circ u$ are homotopic to identity maps $1_{X \sqcap Y^{J_{n}}}$ and $1_{X^{J_{n}}}$, respectively. Moreover, the following diagram commutes:
	\begin{displaymath}
	\xymatrix{
		X^{J_{n}} \ar[dr]_{e_{n}^{f}} \ar@<1ex>[rr]^u & & X \sqcap Y^{J_{n}} \ar@<1ex>[ll]^v \ar[dl]^{q} \\
		& Y^{n}. & }
	\end{displaymath}
	This completes the proof.
\end{proof}

\begin{corollary}
	The function $q : X \sqcap Y^{J_{n}} \rightarrow Y^{n}$ is a pullback fibration of \linebreak $e_{n} : X^{J_{n}} \rightarrow X^{n}$. Moreover, TC$_{n}(f) = secat(q)$.
\end{corollary}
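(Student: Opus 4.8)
The plan is to deduce both assertions from the theorem just proved, using two standard properties of the Schwarz genus: it is unchanged under fibre homotopy equivalence (the instance of Theorem 6.4 of \cite{Rudyak:2016} already invoked for Lemma \ref{l1}), and it cannot increase when one passes to a pullback fibration \cite{Schwarz:1966}.

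The equality $TC_n(f)=secat(q)$ falls out immediately. By Definition \ref{d2} we have $TC_n(f)=secat(e_n^f)$. The preceding theorem supplies fibrewise maps $u\colon X^{J_n}\to X\sqcap Y^{J_n}$ and $v\colon X\sqcap Y^{J_n}\to X^{J_n}$ over $Y^n$ with $u\circ v\simeq 1_{X\sqcap Y^{J_n}}$ and $v\circ u\simeq 1_{X^{J_n}}$, i.e. $e_n^f$ and $q$ are fibre homotopy equivalent fibrations. Since fibre homotopy equivalent fibrations have equal Schwarz genus, $secat(q)=secat(e_n^f)=TC_n(f)$.

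For the pullback statement I would follow the template from the discussion preceding Theorem \ref{t1}, where the comparison fibration was realised literally as a pullback of the path fibration computing $TC$, the base change being the map assembled from $f$. Carrying the same argument out one coordinate at a time, one presents $X\sqcap Y^{J_n}$ together with $q$ as the fibration obtained from $e_n\colon X^{J_n}\to X^n$ by the base change induced by $f$: one writes down the cartesian square whose two ``vertical'' arrows are $q$ and $e_n$, identifies the remaining two arrows (a projection of $X\sqcap Y^{J_n}$ of the kind $\pi_2$ appearing in the proof above, and the $f$-induced base-change map), and checks that it is indeed a pullback. Granting this, $q$ is a pullback fibration of $e_n$; and since $secat$ is monotone under pullback one gets, as a bonus, $TC_n(f)=secat(q)\le secat(e_n)=TC_n(X)$, which re-proves Proposition \ref{p3}.

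The step I expect to be the main obstacle is making that cartesian square honest: naming the two remaining maps precisely, verifying commutativity and the universal property, and checking that the fibrewise equivalence $u,v$ of the preceding theorem is compatible with the two projections of the pullback. As in the proof of Lemma \ref{l1}, this is exactly the place where the surjectivity of $f$ (together with its fibration property) is actually used; once the pullback square is in hand, the rest is formal.
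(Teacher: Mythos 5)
Your derivation of $TC_{n}(f)=secat(q)$ is correct and is exactly the paper's argument: the preceding theorem supplies a fibre homotopy equivalence between $e_{n}^{f}$ and $q$ over $Y^{n}$, and the Schwarz genus is invariant under fibre homotopy equivalence, so $secat(q)=secat(e_{n}^{f})=TC_{n}(f)$. That half of the corollary needs nothing more.

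The pullback claim, however, is where you have a genuine gap, and the plan you sketch would fail as stated. To exhibit $q : X \sqcap Y^{J_{n}} \rightarrow Y^{n}$ as a pullback of $e_{n} : X^{J_{n}} \rightarrow X^{n}$ ``by the base change induced by $f$'' you would need a comparison map $Y^{n}\rightarrow X^{n}$ between the two base spaces; but $f$ goes the other way, $X\rightarrow Y^{n}$, and no canonical map $Y^{n}\rightarrow X^{n}$ exists (absent a section of $f$). So the cartesian square you propose cannot even be written down, and the bonus inequality $TC_{n}(f)\leq secat(e_{n})=TC_{n}(X)$ you hoped to extract from monotonicity of $secat$ under pullback does not come out of this route (the paper obtains it separately, in Proposition \ref{p3}, via the homotopic distance). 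The paper's own proof takes a different square: the right-hand vertical arrow is the evaluation fibration $e_{n} : Y^{J_{n}} \rightarrow Y^{n}$ on the \emph{target} $Y$ (not on $X$, despite the wording of the statement of the corollary), the top arrow is the projection $h : X \sqcap Y^{J_{n}} \rightarrow Y^{J_{n}}$, and the bottom arrow is the identity of $Y^{n}$. Even there the universal property is asserted rather than verified, but at least all four maps exist. If you want to complete your write-up you should either adopt the paper's square (and check it is cartesian, which amounts to unwinding the definition of $X \sqcap Y^{J_{n}}$ as pairs $(x,\beta)$ with $\beta$ a multipath in $Y$ constrained by $f(x)$), or restate the claim so that the fibration being pulled back lives over $Y$ rather than over $X$.
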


\begin{proof}
	Let $h : X \sqcap Y^{J_{n}} \rightarrow Y^{J_{n}}$ be the projection. Then 
	\begin{displaymath}
	\xymatrix{
		X \sqcap Y^{J_{n}} \ar[r]^{h} \ar[d]_{q} &
		Y^{J_{n}} \ar[d]^{e_{n}} \\
		Y^{n} \ar[r]_{1_{Y}} & Y^{n}}
	\end{displaymath}
	commutes. Furthermore, by Theorem 3.24, we get
	\begin{eqnarray*}
		TC_{n}(f) = secat(e_{n}^{f}) = secat(q).
	\end{eqnarray*} 
\end{proof}
\section{Conclusion}
\label{sec:4}
\quad The topological complexity is an essential homotopy invariant for the work of topological robotics. TC$_{n}(X)$ and TC$(f)$ improve this investigation, where $f$ is a continuous and surjective map. The next step is to reveal the expression of TC$_{n}(f)$. We give an answer to this problem when $f$ is a surjective fibration. If we consider the case that $f$ does not have to be a fibration, in other saying, $f$ is just a continuous and surjective map, then this is a still open problem. In this study, with the generalization of TC$(f)$, we have one extra method to examine the problem of motion planning in topological robotics. We also contribute to the investigation of relationship between D$(f,g)$ and secat because the another common point of these two notions is given by TC$_{n}(f)$. One can have a certain fibration $f$ to determine TC$_{n}$ or cat of a space of a fibration by using the properties in Section \ref{sec:2} or Section \ref{sec:3}. The reader is free to choose the way following on D$(f_{1}, \cdots, f_{n})$ for a multipath $f = (f_{1}, \cdots, f_{n}) : X \rightarrow Y^{n}$ or the way following on secat $e_{n}^{f} : X^{J_{n}} \rightarrow Y^{n}$ with $e_{n}^{f} = (f_{1} \times f_{2} \times \cdots \times f_{n}) \circ e_{n}$, where $e_{n}$ is the path fibration in the definition of TC$_{n}$ of a path-connected topological space $X$. 

\quad In digital images, there are many different computations on the notions TC, TC$_{n}$ and cat rather than topological spaces. Analogously, the definition of TC$_{n}(f)$ and the related results can be adapted to the digital images using the digital meaning of the (higher) homotopic distance. The task is to determine the similarities and the differences on TC$_{n}$ of a map between ordinary spaces and digital images.

\quad One of the future researches on TC$_{n}(f)$ is to state its symmetric version, in other stating, the higher symmetric topological complexity for a map (or a fibration). Each of directed and monodial versions of TC$_{n}(f)$ is another topic for the reader. 

\acknowledgment{Research Fund of the Ege University partially supported this work (The Project Number is FDK-2020-21123). Also, the Scientific and Technological Research Council of Turkey TUBITAK-2211-A grants the first author as fellowship.}

\end{document}